\documentclass{elsarticle}
\usepackage{amssymb, amsmath, latexsym}

\renewcommand{\baselinestretch}{\baselinestretch}
\renewcommand{\baselinestretch}{1.1}
\numberwithin{equation}{section}

\newtheorem{thm}{Theorem}[section]
\newtheorem{lem}[thm]{Lemma}
\newtheorem{cor}[thm]{Corollary}
\newtheorem{prop}[thm]{Proposition}

\newdefinition{defn}[thm]{Definition}

\newdefinition{rmk}[thm]{Remark}
\newdefinition{exam}[thm]{Example}
\newproof{proof}{Proof}

\numberwithin{equation}{section}

\newcommand{\ord}{\text{ord}}

\newcommand{\z}{{\mathbb Z}}
\newcommand{\q}{{\mathbb Q}}

\newcommand{\norm}{\mathfrak n}

\newcommand{\ba}{\mathbf a}
\newcommand{\bb}{\mathbf b}
\newcommand{\bc}{\mathbf c}
\newcommand{\bu}{\mathbf u}
\newcommand{\bv}{\mathbf v}
\newcommand{\bx}{\mathbf x}
\newcommand{\by}{\mathbf y}
\newcommand{\bz}{\mathbf z}
\newcommand{\bw}{\mathbf w}
\newcommand{\be}{\mathbf e}
\begin{document}

\begin{frontmatter}

\title{The representation of integers by positive ternary quadratic polynomials}

\author{Wai Kiu Chan \corref{cor1}}

\author{James Ricci \corref{cor2}}

\cortext[cor1]{Author: Wai Kiu Chan, Wesleyan University, Department of Mathematics and Computer Science, 265 Church St., Middletown, CT, 06459, USA; Email, wkchan@wesleyan.edu; Phone, +001 860 685 2196}
\cortext[cor2]{Corresponding Author: James Ricci, Daemen College, Department of Mathematics and Computer Science, 4380 Main St., Amherst, NY, 14226, USA; Email, jricci@daemen.edu; Phone, +001 716 566 7833}

\begin{abstract}
An integral quadratic polynomial is called regular if it represents every integer that is represented by the polynomial itself over the reals and over the $p$-adic integers for every prime $p$.  It is called complete if it is of the form $Q(\bx + \bv)$, where $Q$ is an integral quadratic form in the variables $\bx = (x_1, \ldots, x_n)$ and $\bv$ is a vector in $\q^n$.  Its conductor is defined to be the smallest positive integer $c$ such that $c\bv \in \z^n$.  We prove that for a fixed positive integer $c$, there are only finitely many equivalence classes of positive primitive ternary regular complete quadratic polynomials with conductor $c$.  This generalizes the analogous finiteness results for positive definite regular ternary quadratic forms by Watson \cite{watson1, watson2} and for ternary triangular forms by Chan and Oh \cite{co2}.
\end{abstract}

\begin{keyword}
Representations of quadratic polynomials 

\MSC[2010] 11D09 \sep11E12 \sep 11E20
\end{keyword}

\end{frontmatter}

\section{Introduction}

Let $f(\bx) = f(x_1, \ldots, x_n)$ be an $n$-ary quadratic polynomial in variables $\bx = (x_1, \ldots, x_n)$ with rational coefficients.  It takes the form
$$f(\bx) = Q(\bx) + \ell(\bx) + m$$
where $Q(\bx)$ is a quadratic form, $\ell(\bx)$ is a linear form, and $m$ is a constant.  Given a rational number $a$, it follows from the Hasse Principle that the diophantine equation
\begin{equation} \label{basicequation}
f(\bx) = a
\end{equation}
is soluble over the rationals if and only if it is soluble over the $p$-adic numbers for each prime $p$ and over the reals.  However, this local-to-global approach breaks down when we consider integral representations.  Indeed, there are plenty of  examples of quadratic polynomials for which \eqref{basicequation} is soluble over each $\z_p$ and over $\mathbb R$, but not soluble over $\z$.  Borrowing a term coined by Dickson for quadratic forms, we call a quadratic polynomial $f(\bx)$ {\em regular} if for every  rational number $a$,
\begin{equation*}
\mbox{\eqref{basicequation} is soluble over $\z$ $\Longleftrightarrow$ \eqref{basicequation} is soluble over each $\z_p$ and over $\mathbb R$.}
\end{equation*}

G.L. Watson \cite{watson1, watson2} showed that there are only finitely many equivalence classes of primitive positive definite regular ternary quadratic forms.  A list of representatives of these classes has been compiled in \cite{jks} by Jagy, Kaplansky, and Schiemann.  Their list contains 913 ternary quadratic forms, and 891 are verified by them to be regular.  Later B.-K. Oh \cite{oh} proves the regularity of 8 of the remaining 22 quadratic forms.  More recently, R. Lemke Oliver \cite{oliver} establishes the regularity of the last 14 quadratic forms under the generalized Riemann Hypothesis.  Watson's result has been generalized by different authors to definite ternary quadratic forms over other rings of arithmetic interest \cite{cd, ci}, to higher dimensional representations of positive definite quadratic forms in more variables \cite{co}, and to positive definite ternary quadratic forms which satisfy other regularity conditions \cite{ce}.

There are regular quadratic polynomials that are not quadratic forms.  A well-known example is the sum of three triangular numbers
$$\frac{x_1(x_1 + 1)}{2} + \frac{x_2(x_2 + 1)}{2} + \frac{x_3(x_3 + 1)}{2},$$
which is universal (i.e. representing all positive integers) and hence regular.   Given positive integers $a, b, c$, we follow the terminology in \cite{co3} and call the polynomial
$$\Delta(a, b, c): = a\frac{x_1(x_1 + 1)}{2} + b\frac{x_2(x_2 + 1)}{2} + c\frac{x_3(x_3 + 1)}{2}$$
a triangular form.  It is {\em primitive} if $\gcd(a, b, c) = 1$.  There are seven universal ternary triangular forms--hence all are regular-- and they were found by Liouville in 1863 \cite{l}.  An example of a regular ternary triangular form which is not universal is $\Delta(1,1,3)$.  We offer a proof in the following example.

\begin{exam}
It is easy to see that $\Delta(1,1,3)$ does not represent 8.  Hence it is not universal.

A positive integer $n$ is represented by $\Delta(1,1,3)$ if and only if $8n + 5$ is represented by $h_1(\bx) = x_1^2 + x_2^2 + 3x_3^2$ with the extra conditions that $x_1 \equiv x_2 \equiv x_3 \equiv 1$ mod 2.  Let $h_2(\bx)$ be the quadratic form $x_1^2 + x_2^2 + 12x_3^2$.  If $r(n)$ is the number of representations of $n$ by $\Delta(1,1,3)$, then
$$r(n) = r_1(8n + 5) - r_2(8n + 5)$$
where, for $i = 1, 2$,  $r_i(8n + 5)$ is the number of representations of $8n + 5$ by $h_i(\bx)$.  Note that both $h_1(\bx)$ and $h_2(\bx)$ have class number 1, and so the Minkowski-Siegel mass formula \cite[Theorem 6.8.1]{k} implies that both $r_1(8n + 5)$ and $r_2(8n + 5)$ can be expressed as products of local densities.   For each odd prime $p$, $h_1(\bx)$ and $h_2(\bx)$ are equivalent over $\z_p$ and hence the local densities $\alpha_p(8n + 5, h_1)$ and $\alpha_p(8n + 5, h_2)$ are the same.   It then follows from \cite[Theorem 6.8.1]{k} that
$$\frac{r_1(8n + 5)}{r_2(8n + 5)} = \frac{\alpha_2(8n + 5, h_1)}{2^{-1}\alpha_2(8n + 5, h_2)}.$$
The 2-adic densities $\alpha_2(8n+5, h_1)$ and $\alpha_2(8n+5, h_2)$ can be computed by \cite[Proposition 5.6.1]{k},  and both of them can be shown to be equal to 16.  Therefore, $r_1(8n + 5) = 2r_2(8n + 5)$ and hence $r(n) = \frac{1}{2}r_1(8n + 5)$.

Now, suppose that $n$ is represented by $\Delta(1,1,3)$ over $\z_p$ for every prime $p$.  Then, since $h_1(\bx)$ has class number 1, $8n + 5$ is represented by $h_1(\bx)$.  This means that $r_1(8n + 5)$ is not zero, whence $r(n)$ is also not zero.  Thus, $n$ is represented by $\Delta(1,1,3)$.  This proves that $\Delta(1,1,3)$ is regular.
\end{exam}

It is shown in \cite{co2} that there are only finitely many primitive ternary regular triangular forms.   In this paper, we will extend Watson's finiteness results to ternary quadratic polynomials with positive definite quadratic parts.

A quadratic polynomial is called {\em nondegenerate} if its quadratic part is nondegenerate.   Let $f(\bx)$ be a nondegenerate quadratic polynomial.  If $Q(\bx)$ is its quadratic part and $B$ is the bilinear form corresponding to $Q$, then there exists a unique $\bv \in \q^n$ such that $2B(\bv, \bx)$ is the linear part of $f(\bx)$.  The {\em conductor} of $f(\bx)$, introduced in \cite{ah}, is the smallest positive integer $c$ such that $c\bv \in \z^n$.  The polynomial is {\em integral} if $f(\ba) \in \z$ for all $\ba \in \z^n$, and is called {\em primitive} if the ideal generated by the set $\{f(\ba) : \ba \in \z^n\}$ is $\z$.  If $Q$ is positive definite, then $f(\mathbf x)$ attains an absolute minimum on $\mathbb Z^n$.  We then call $f(\mathbf x)$ {\em positive} if its absolute minimum  is nonnegative.

Another quadratic polynomial $g(\bx)$ is said to be {\em equivalent} to $f(\bx)$ if there exist $T\in \text{GL}_n(\z)$ and $\bu \in \z^n$ such that
$$g(\bx) = f(\bx T + \bu).$$
This defines an equivalence relation on the set of quadratic polynomials, and it is clear that the conductor of a quadratic polynomial is a class invariant.
It is also clear that regularity is preserved under this notion of equivalence.   However, simply changing the constant term of a regular quadratic polynomial will produce infinitely many inequivalent regular quadratic polynomials of the same number of variables.    Therefore, in order to obtain any finiteness results analogous to Watson's, we need to confine our attention to a special family of quadratic polynomials.
Following the terminology introduced in \cite{co2}, we call a quadratic polynomial $f(\bx)$ {\em complete} if it takes the form
$$f(\bx) = Q(\bx) + 2B(\bv, \bx) + Q(\bv) = Q(\bx + \bv).$$
By adjusting the constant term and multiplying by a suitable rational number, any nondegenerate quadratic polynomial can be changed to a primitive complete quadratic polynomial.  The main result of this paper is

\begin{thm} \label{mainpolynomial}
Let $c$ be a fixed positive integer.  There are only finitely many equivalence classes of positive primitive ternary regular complete quadratic polynomials with conductor $c$.
\end{thm}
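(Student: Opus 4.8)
The plan is to recast regularity in terms of lattice cosets and then run a Watson-style reduction. Write a positive primitive complete ternary quadratic polynomial as $f(\bx)=Q(\bx+\bv)$, let $L\subseteq V=\q^3$ be the $\z$-lattice with quadratic form $Q$, and put $M=\bv+L$; prescribing the conductor to be $c$ means exactly that the class $\bv+L$ has order $c$ in $V/L$. An integer $n$ is represented by $f$ precisely when $n=Q(\bw)$ for some $\bw\in M$, and this condition is local: $n$ is represented by $f$ over $\z_p$ iff it is represented by $M_p=\bv+L_p$. Hence $f$ is regular iff every positive integer represented by $M_p$ for all $p$ --- equivalently, every positive integer represented by the genus of $M$ --- is represented by $M$. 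Since positive definite ternary $\z$-lattices of bounded discriminant form finitely many isometry classes, and each carries only finitely many cosets of conductor $c$ up to $O(L)$ and translation by $L$, it suffices to bound $d(L)$ in terms of $c$ for regular $f$ in our family.

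To obtain that bound, I would introduce Watson-type transformations on pairs $(L,\bv)$: for each prime $p$, the analogue of Watson's $\lambda_p$ together with the auxiliary maps needed to absorb the $p$-part of the conductor, in the spirit of the transformations used for ternary triangular forms in \cite{co2}. Each such transformation does not increase the $p$-adic invariants of the pair, strictly decreases them unless $(L,\bv)$ is already $p$-reduced, carries regular cosets to regular cosets, and keeps the conductor controlled; iterating over the finitely many primes dividing $2c\,d(L)$ then reduces us, as in \cite{watson1,watson2,co2}, to bounding $d(L)$ for cosets that are $p$-reduced at every $p$. The first technical task is to set these maps up correctly in the coset setting --- keeping track of $\bv\bmod L$ and of the conductor, and matching up the integers represented (resp. locally represented) by $(L,\bv)$ and by its transform, so that regularity genuinely transfers along the correspondence.

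For a $\Lambda$-reduced coset the local structure of $L$ at each prime belongs to one of finitely many types, the coset is unramified outside $2c\,d(L)$, and the set $\mathcal E$ of eligible integers is cut out by congruence conditions whose modulus is bounded by a power of $\rad(2c\,d(L))$, with density $\gg 1/\log\log d(L)$ among the positive integers. Bounding $d(L)$ then comes down to showing that once an excessively large prime power divides $d(L)$ there is an eligible integer not represented by $M$. The main tool for this is the spinor genus theory of ternary lattices and cosets: a ternary genus of large discriminant decomposes into several spinor genera and carries spinor-exceptional square classes, and one must exhibit, for $d(L)$ large, an eligible integer lying in such a class but outside the spinor genus of $M$ --- represented by the genus of $M$, hence not by $M$, contradicting regularity. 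I expect this to be the principal obstacle: computing the spinor norm groups $\theta(O^+(M_p))$ for the coset $M=\bv+L$ rather than for $L$, pinning down the spinor-exceptional classes from the local data, separately disposing of the extreme cases where $Q$ has a vector of bounded length (so that $f$ reduces to a binary quadratic polynomial together with a bounded extra coefficient, to be handled by the genus theory of binary forms), and carrying all of this out uniformly in $c$. The elementary estimate $\#\{n\le X: f\text{ represents }n\}\le\#\{\bw\in M: Q(\bw)\le X\}\ll X^{3/2}/\sqrt{d(L)}$ serves only as a sanity check and does not by itself close the argument. Assembling these pieces with the Watson reduction yields the theorem.
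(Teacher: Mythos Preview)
Your setup --- rewriting $f$ as a $\z$-coset $L+\bv$, noting that regularity of $f$ is regularity of the coset, and reducing via Watson-type $\lambda_p$ to cosets that behave well at all odd $p\nmid c$ --- matches the paper exactly (Propositions~\ref{descend} and~\ref{behavewell}, Lemma~\ref{finite}). The divergence is in how you propose to bound $d(L)$ once the coset is reduced. You reach for spinor genus theory: exhibit a spinor-exceptional square class meeting the eligible set and conclude that some eligible integer is missed. The paper does \emph{not} do this. Instead it uses precisely the counting idea you dismiss as ``only a sanity check'': if $m<\mu_2$ (respectively $m<\tfrac{3}{2}\mu_3$), Proposition~\ref{inequality} forces any $\ba\in L$ with $Q(\ba+\bv)\le m$ to have bounded coordinates, so the number of represented integers below $m$ is $O(\sqrt m)$; comparing this with a lower bound on the density of eligible integers (via Lemma~\ref{kkolemma} and the Burgess-based Propositions~\ref{eresult} and~\ref{charactersum}) bounds $\mu_2$, and then a Chinese-remainder construction with $17$ auxiliary primes $\ell_j$ at which the binary sublattice $M=\z\be_1+\z\be_2$ is anisotropic produces an eligible integer not represented by any of the $17$ polynomials $Q(y_1\be_1+y_2\be_2+(j-9)\be_3+\bv)$, forcing $|a_3|\ge 9$ and hence bounding $\mu_3$.

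Your spinor route is not obviously wrong, but it is not a proof as written and faces a real obstacle you yourself flag: spinor genus theory for cosets $\bv+L$ (as opposed to lattices) is not available off the shelf --- the relevant automorphism group must fix the coset, the spinor norm groups change, and one would need a coset analogue of the classification of spinor exceptions before one can even look for an eligible exceptional integer. By contrast, the paper's argument is entirely elementary once Burgess is in hand, and it keeps the conductor fixed throughout (the $\lambda_p$ are applied only at primes $p\nmid c$ and preserve $c$; no ``auxiliary maps to absorb the $p$-part of the conductor'' are needed). If you want to rescue your sketch, the shortest path is to drop spinor genera and instead sharpen the counting estimate you already wrote down into the two-stage argument (bound $\mu_2$, then $\mu_3$) of Sections~5--6.
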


Theorem \ref{mainpolynomial} extends Watson's finiteness result on regular ternary quadratic forms because a quadratic form is equivalent to a complete quadratic polynomial of conductor 1, and each equivalence class of complete quadratic polynomials of conductor 1 contains a unique equivalence class of quadratic forms.

The paper is organized as follows.  Section 2 contains some preliminary results on representations of quadratic polynomials in general.  In Section 3, we discuss various estimates on the number of integers in an interval which satisfy certain arithmetic conditions.  A set of regularity preserving transformations on quadratic polynomials will be introduced in Section 4.  These transformations and their properties will be best described using the language of quadratic spaces, lattices, and cosets.  It is this language that we will adopt for the rest of the paper.  In Sections 5 and 6 we will present the proof of Theorem \ref{mainpolynomial}.  Finally, in Section 7, we will discuss a particular family of quadratic polynomials called polygonal forms, and explain how Theorem \ref{mainpolynomial} implies the finiteness result of regular ternary triangular forms in \cite{co2} mentioned earlier.

\section{Preliminaries}

Let $f(\bx) = Q(\bx) + 2B(\bv, \bx) + m$ be an integral quadratic polynomial. The norm ideal of $Q$, denoted $\norm$, is the ideal of $\z$ generated by the set of integers represented by $Q(\bx)$.  The set $\mathfrak b: = \{2B(\bv, \bx): \bx \in \z^n\}$ is an ideal of $\z$.  It is not hard to check that both $\norm$ and $\mathfrak b$ are inside $\frac{1}{2}\z$ as a result of the integrality of $f(\bx)$, and that $\norm = \frac{1}{2}\z$ if and only if $\mathfrak b = \frac{1}{2}\z$.  We denote the polynomial $Q(\bx) + 2B(\bv, \bx)$ by $f_0(\bx)$, and let $\norm_0(f)$ be the ideal of $\z$ generated by the integers represented by $f_0(\bx)$.

\begin{lem}
If $g(\bx)$ is equivalent to $f(\bx)$, then $\norm_0(f) = \norm_0(g)$.
\end{lem}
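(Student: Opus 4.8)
The plan is to show that the equivalence $g(\bx) = f(\bx T + \bu)$ with $T \in \mathrm{GL}_n(\z)$ and $\bu \in \z^n$ sends $f_0$ to a polynomial whose constant-free part is $g_0$, up to a constant that lies in $\norm_0(f)$, and symmetrically. First I would write $f(\bx) = Q(\bx) + 2B(\bv,\bx) + m$ so that $f_0(\bx) = Q(\bx) + 2B(\bv,\bx) = Q(\bx + \bv) - Q(\bv)$, and then expand
$$g(\bx) = f(\bx T + \bu) = Q(\bx T + \bu + \bv) - Q(\bv) + m.$$
Setting $Q'(\bx) = Q(\bx T)$, which is the quadratic part of $g$, and $\bw$ the unique vector with $2B'(\bw,\bx)$ equal to the linear part of $g$, one computes directly that $g_0(\bx) = Q'(\bx) + 2B'(\bw,\bx) = Q(\bx T + \bu + \bv) - Q(\bu + \bv)$. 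Thus $g_0(\bx) = f_0(\bx T + \bu + \bv - \bv)$ shifted appropriately; more precisely, for every $\ba \in \z^n$ we have $g_0(\ba) = Q(\ba T + \bu + \bv) - Q(\bu + \bv)$, and since $\ba T + \bu$ runs over a subset of $\z^n$ as $\ba$ runs over $\z^n$ (indeed over all of $\z^n$ when $\ba$ does, because $T$ is unimodular and $\bu \in \z^n$), the value $Q(\ba T + \bu + \bv) - Q(\bv) = f_0(\ba T + \bu)$ is an integer represented by $f_0$. Hence $g_0(\ba) = f_0(\ba T + \bu) + \big(Q(\bv) - Q(\bu+\bv)\big)$, and the correction term $Q(\bv) - Q(\bu+\bv) = -2B(\bv,\bu) - Q(\bu)$ is a fixed integer.

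The key point is then that this fixed correction term lies in $\norm_0(f)$: taking $\ba = \mathbf 0$ gives $g_0(\mathbf 0) = 0$, so $0 = f_0(\bu) + (Q(\bv) - Q(\bu+\bv))$, whence $Q(\bv) - Q(\bu+\bv) = -f_0(\bu) \in \norm_0(f)$. Therefore every value $g_0(\ba) = f_0(\ba T + \bu) - f_0(\bu)$ is a difference of two integers represented by $f_0$, so $g_0(\ba) \in \norm_0(f)$ for all $\ba \in \z^n$, giving $\norm_0(g) \subseteq \norm_0(f)$. By symmetry — applying the same argument to $f(\bx) = g(\bx T^{-1} - \bu T^{-1})$, noting $T^{-1} \in \mathrm{GL}_n(\z)$ and $\bu T^{-1} \in \z^n$ — we get the reverse inclusion, and hence $\norm_0(f) = \norm_0(g)$.

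I expect the only mild subtlety to be bookkeeping: keeping straight which bilinear form ($B$ for $Q$ versus $B'$ for $Q'$) is being used, and verifying that the vector $\bw$ attached to $g$ really does satisfy $\bw = (\bu + \bv)T^{-1}$ or the analogous identity, so that the identity $g_0(\ba) = Q(\ba T + \bu + \bv) - Q(\bu+\bv)$ is exactly $g_0$ and not $g_0$ plus some stray linear or constant term. This is a routine expansion using $Q(\by + \bz) = Q(\by) + 2B(\by,\bz) + Q(\bz)$, and there is no real obstacle; the finiteness of $T$ or $\bu$ plays no role, only their integrality and the unimodularity of $T$.
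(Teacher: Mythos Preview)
Your proposal is correct and follows essentially the same route as the paper: both arguments arrive at the identity $g_0(\ba) = f_0(\ba T + \bu) - f_0(\bu)$, from which $\norm_0(g) \subseteq \norm_0(f)$ is immediate, and then invoke the symmetry of the equivalence relation for the reverse inclusion. The only cosmetic difference is that the paper first disposes of the case $g(\bx) = f(\bx T)$ (where $f_0$ and $g_0$ have identical value sets) and then treats only the translation $g(\bx) = f(\bx + \bu)$, whereas you handle the full affine change of variables in one pass; your detour through $\bw$ and $B'$ is unnecessary once you have the identity $g_0(\bx) = g(\bx) - g(\mathbf 0)$, but it does no harm.
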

\begin{proof}
It is clear that $\norm_0(f) = \norm_0(g)$ if $g(\bx) = f(\bx T)$ for any $T \in \text{GL}_n(\z)$.  Therefore, we may assume that $g(\bx) = f(\bx + \bu)$ for some $\bu \in \z^n$.  A simple calculation shows that $g_0(\bx) = f_0(\bx + \bu) - f_0(\bu)$, whence $\norm_0(g) \subseteq \norm_0(f)$.  The reverse inclusion is obtained by observing that $f(\bx) = g(\bx - \bu)$.
\hfill $\square$\end{proof}

\begin{lem}\label{4c}
If $c$ is the conductor of $f(\bx)$, then $4c \,Q(\bv)\in \norm_0(f)$.  If, in addition, $f(\bx)$ is primitive and complete, then $4c\in \norm_0(f)$.
\end{lem}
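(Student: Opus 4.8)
The plan is to recall that $f(\bx) = Q(\bx+\bv)$ is complete, so $f_0(\bx) = f(\bx) - Q(\bv) = Q(\bx) + 2B(\bv,\bx)$, and to produce a small integer value of $f_0$ on $\z^n$ together with a comparison argument that pins $4cQ(\bv)$ inside $\norm_0(f)$. First I would note that since $c\bv \in \z^n$, evaluating $f_0$ at $\bx = c\bv$ gives $f_0(c\bv) = c^2 Q(\bv) + 2c B(\bv,\bv) = c^2 Q(\bv) + 2c^2 Q(\bv)$; more usefully, $f_0(c\bv) = Q(c\bv + \bv) - Q(\bv) = Q((c+1)\bv) - Q(\bv) = ((c+1)^2 - 1)Q(\bv) = (c^2 + 2c)Q(\bv)$, which lies in $\norm_0(f)$. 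Similarly $f_0(-c\bv) = Q((1-c)\bv) - Q(\bv) = ((1-c)^2-1)Q(\bv) = (c^2 - 2c)Q(\bv) \in \norm_0(f)$. Subtracting, $4cQ(\bv) = f_0(c\bv) - f_0(-c\bv) \in \norm_0(f)$, since $\norm_0(f)$ is an ideal (hence closed under differences). This disposes of the first assertion without any case analysis; the only thing to be careful about is that $\norm_0(f)$ is genuinely an ideal of $\z$, which is immediate from its definition as the ideal generated by the values of $f_0$.

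For the second assertion, assume in addition that $f(\bx)$ is primitive and complete. The idea is that primitivity forces $\gcd$ of all values $f(\ba) = f_0(\ba) + Q(\bv)$, $\ba \in \z^n$, to be $1$, and I want to convert this into information about $\norm_0(f)$. Write $\norm_0(f) = d\z$ for the positive generator $d$; from the first part, $d \mid 4cQ(\bv)$. Since $Q(\bv) = f(\mathbf 0)$ and more generally $f(\ba) = f_0(\ba) + Q(\bv)$, every value $f(\ba)$ is congruent to $Q(\bv)$ modulo $d$. Hence the ideal generated by $\{f(\ba) : \ba \in \z^n\}$ is contained in $\gcd(d, Q(\bv))\,\z$, and primitivity gives $\gcd(d, Q(\bv)) = 1$. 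Combining with $d \mid 4cQ(\bv)$ and $\gcd(d, Q(\bv)) = 1$ yields $d \mid 4c$, i.e. $4c \in \norm_0(f)$, as claimed.

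I expect the first assertion to be entirely routine once the substitution $\bx = \pm c\bv$ is spotted; the mild subtlety is to use the expansion $f_0(\bx) = Q(\bx+\bv) - Q(\bv)$ rather than expanding $2B(\bv,\bx)$ directly, so that the bilinear-form bookkeeping collapses to differences of $Q$ evaluated at integer multiples of $\bv$. The main point requiring care is the second assertion: one must correctly translate "primitive" (a statement about the ideal generated by all values $f(\ba)$, including the constant term $Q(\bv)$) into the coprimality $\gcd(d, Q(\bv)) = 1$, and then chase divisibilities. No positivity or ternariness is needed here, so the lemma holds in this generality; these hypotheses will only be invoked later.
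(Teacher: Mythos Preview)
Your proof is correct and follows essentially the same approach as the paper's: evaluate $f_0$ at two integer multiples of $\bv$ and take an integer combination to isolate $4cQ(\bv)$, then use primitivity to conclude $\gcd(\norm_0(f),Q(\bv))=\z$. The only cosmetic difference is that you use $\pm c\bv$ (giving $(c^2\pm 2c)Q(\bv)$ and a straight subtraction), whereas the paper uses $c\bv$ and $2c\bv$ (giving $(c^2+2c)Q(\bv)$ and $(4c^2+4c)Q(\bv)$); one minor remark is that your opening line invokes completeness for the first assertion, but the identity $f_0(\bx)=Q(\bx+\bv)-Q(\bv)$ you actually use holds for any $f$, so the first part does not need that hypothesis.
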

\begin{proof}
Since $c\bv$ and $2c\bv$ are in $\z^n$, both $Q(\bv)(c^2 + 2c)$ and $Q(\bv)(4c^2 + 4c)$ are in $\norm_0(f)$.  Thus, $4c\, Q(\bv) \in \norm_0(f)$.  If $f(\bx)$ is primitive and complete, then $Q(\bv)$ is the constant term of $f(\bx)$ and  is relatively prime to $\norm_0(f)$.  This implies the second assertion.
\hfill $\square$\end{proof}

Let $p$ be a prime and $I$ be an ideal of $\z_p$.  We say that $f(\bx)$ represents a coset modulo $I$ if over $\z_p$, $f(\bx)$ represents $r + I$ for some $r \in \z_p$.  Note that for every $\ba \in \z_p^n$, $f_0(\bx)$ represents every $p$-adic integer that is represented by the one variable polynomial $Q(\ba)x^2 + 2B(\bv, \ba)x$ over $\z_p$.

\begin{lem} \label{localrep}
Let $p$ be a prime.  If $f(\bx)$ is primitive,  then $f(\bx)$ represents a coset modulo $p^k\z_p$, where $0 \leq k \leq 1 + \ord_p(\norm_0(f)) + 2\delta_{2,p}$ and $\delta_{2,p}$ is the Kronecker delta.
\end{lem}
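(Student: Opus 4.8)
The plan is to produce, for each prime $p$, an explicit $p$-adic point $\ba \in \z_p^n$ at which the restriction of $f$ to the line $\{\ba t : t \in \z_p\}$ already attains a coset of the required size, and then show that this one-variable restriction covers an entire coset modulo $p^k\z_p$ with $k$ in the stated range. Recall from the remark preceding the lemma that $f_0(\bx)$ represents every $p$-adic integer represented by $Q(\ba)x^2 + 2B(\bv,\ba)x$, so it suffices to analyze such binary-in-one-variable polynomials. Writing $\alpha = Q(\ba)$ and $\beta = 2B(\bv,\ba)$, completing the square gives $\alpha x^2 + \beta x = \alpha(x + \beta/(2\alpha))^2 - \beta^2/(4\alpha)$ when $\alpha \neq 0$; the set of values of $\alpha x^2$ as $x$ ranges over $\z_p$ is $\alpha \cdot (\z_p^\times)^2 \cup \{0\}$ together with the higher-power scalings, and crucially $\alpha(1 + p^j u)^2 = \alpha + 2\alpha p^j u + \alpha p^{2j}u^2$, so shifting $x$ by a unit multiple of $p^j$ moves the value by something in $2\alpha p^j \z_p + \alpha p^{2j}\z_p = 2\alpha p^j\z_p$ once $j \geq 1 + 2\delta_{2,p}$ (the $\delta_{2,p}$ accounting for $\ord_2(2) = 1$). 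Thus the values of $f_0$ along this line fill a full coset modulo $p^{\ord_p(2\alpha) + 1 + 2\delta_{2,p}}\z_p$, hence modulo $p^{\ord_p(\alpha) + 1 + 2\delta_{2,p} + \delta_{2,p}}$ — we will choose $\ba$ to make $\ord_p(\alpha)$ as small as possible.

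The next step is to exploit primitivity to choose $\ba$ well. Since $f$ is primitive, the ideal generated by $\{f(\ba):\ba\in\z^n\}$ is $\z$, and by the discussion in Section 2 this forces $\norm_0(f)$ and $\mathfrak b$ to be controlled; in particular $\norm_0(f) = p^{\ord_p(\norm_0(f))}\z_p$ locally, and primitivity guarantees there exists $\ba_0 \in \z_p^n$ with $\ord_p(f_0(\ba_0)) = \ord_p(\norm_0(f))$, i.e. $f_0$ represents a unit multiple of $p^{\ord_p(\norm_0(f))}$. The point is then that $f_0(\ba_0 + p^j \be_i)$ for suitable $i$ and $j \geq 1$ differs from $f_0(\ba_0)$ by $p^j$ times a linear-plus-$p^j$ correction, and iterating (or directly invoking the one-variable analysis above at the scaled point) shows $f_0$ represents every residue in $f_0(\ba_0) + p^{k}\z_p$ with $k \leq 1 + \ord_p(\norm_0(f)) + 2\delta_{2,p}$. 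One must be slightly careful when $\ord_p(\norm_0(f)) = 0$: then $f_0$ already represents a $p$-adic unit, and a unit together with its neighbors modulo $p^{1+2\delta_{2,p}}$ is handled by the same square-completion estimate, or one simply takes $k = 0$, which is always permissible and requires nothing.

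I expect the main obstacle to be the bookkeeping around $p = 2$ and the precise exponent: showing that the "shift by $p^j$" argument genuinely gains a full coset rather than a coset with a gap requires checking that $2\alpha p^j\z_p$ (not merely $\alpha p^{2j}\z_p$) is what one picks up, and over $\z_2$ the extra factor of $2$ from both the linear term $2B(\bv,\ba)$ and from $2\alpha$ is exactly why the $2\delta_{2,p}$ — in fact a coefficient of $2$ on it — appears. A clean way to organize this is: (i) reduce to the one-variable polynomial $g(x) = \alpha x^2 + \beta x$ via the remark; (ii) prove the elementary lemma that over $\z_p$, $g$ represents a full coset modulo $p^{\ord_p(\gcd(2\alpha,\beta)) + 1 + 2\delta_{2,p}}\z_p$ whenever $(\alpha,\beta)\neq(0,0)$, by completing the square (or, if $\alpha$ is not a unit, by the substitution $x \mapsto p^{-\lceil \ord_p(\alpha)/2\rceil}x$-type scaling done integrally); (iii) use primitivity to select $\ba$ with $\ord_p(g_\ba) \leq \ord_p(\norm_0(f))$ where $g_\ba(x) = Q(\ba)x^2 + 2B(\bv,\ba)x$, and note $\ord_p(\gcd(2Q(\ba), 2B(\bv,\ba))) \leq 1 + \ord_p(\norm_0(f))$ because $g_\ba(1) = Q(\ba) + 2B(\bv,\ba) \in \norm_0(f)$ forces $\ord_p$ of that sum to be at most $\ord_p(\norm_0(f))$; (iv) assemble the bound. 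Step (iii) is where the constant "$1+$" in the exponent is born, and keeping the $\z_2$ version of (ii) honest is the one computation worth doing carefully.
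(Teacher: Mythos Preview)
Your overall strategy coincides with the paper's: both reduce to the one-variable polynomial $g_{\ba}(x)=Q(\ba)x^{2}+2B(\bv,\ba)x$ for a well-chosen $\ba\in\z_p^{n}$ and then use the Local Square Theorem / Hensel's lemma to produce a full coset. The difference is that the paper does a direct case analysis on the ideal $\mathfrak b=\{2B(\bv,\bx):\bx\in\z^{n}\}$ (and, at $p=2$, on $\ord_2(2B(\bv,\bc))$), while you try to package everything into a single uniform one-variable lemma (ii) together with the selection step (iii).

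There is a genuine gap in your version, and it is exactly where you suspected it would be. First, the justification in (iii) is reversed: membership of $g_{\ba}(1)=Q(\ba)+2B(\bv,\ba)$ in $\norm_0(f)$ gives $\ord_p\ge \ord_p(\norm_0(f))$, not $\le$; what you need (and state earlier) is to \emph{choose} $\ba$ with $\ord_p(f_0(\ba))=\ord_p(\norm_0(f))$, which then yields $\ord_p(\gcd(\alpha,\beta))\le \ord_p(\norm_0(f))$ with $\alpha=Q(\ba)$, $\beta=2B(\bv,\ba)$. For odd $p$ this already equals $\ord_p(\gcd(2\alpha,\beta))$, so no extra ``$+1$'' appears, and (ii)$+$(iii) give the correct bound. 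Second, and more seriously, your stated exponent in (ii), namely $\ord_p(\gcd(2\alpha,\beta))+1+2\delta_{2,p}$, is not sharp enough at $p=2$: when $\ord_2(\alpha)=\ord_2(\norm_0(f))$ and $\ord_2(\beta)>\ord_2(\norm_0(f))$ one has $\ord_2(\gcd(2\alpha,\beta))=\ord_2(\norm_0(f))+1$, and your (ii) then yields only $k\le \ord_2(\norm_0(f))+4$, one worse than the lemma claims. The correct one-variable fact in that situation is that $\alpha x^{2}+\beta x$ (with $\alpha$ a $2$-unit after scaling out $2^{\ord_2(\norm_0(f))}$) already represents a coset modulo $8$, i.e.\ $k\le \ord_2(\norm_0(f))+3$; this is precisely what the paper extracts by its case-by-case treatment of $\ord_2(2B(\bv,\bc))\in\{1,2,\ge 3\}$. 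So your plan is the paper's plan, but to close the argument at $p=2$ you must either sharpen (ii) to $k\le \ord_p(\alpha)+1+2\delta_{2,p}$ in the regime $\ord_p(\alpha)\le \ord_p(\beta)$ (via completing the square) and $k\le \ord_p(\beta)+1$ in the regime $\ord_p(\beta)<\ord_p(\alpha)$ (via Hensel), or simply do the $2$-adic case split as the paper does.
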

\begin{proof}
We may assume that $f(\bx) = f_0(\bx)$ and $\norm_0(f) = \z$.  Suppose first that $p$ is an odd prime.  If $p\mid \mathfrak b$, then there exists $\ba \in \z_p^n$ such that $2B(\bv, \ba) \in p\z_p$ and $Q(\ba) \in \z_p^\times$.  By the Local Square Theorem \cite[63:1]{om} and \cite[63:8]{om}, the polynomial $Q(\ba)x^2 + 2B(\bv, \ba) x$ represents the coset $Q(\ba) + p\z_p$.  If, on the other hand, $2B(\bv, \bb) \in \z_p^\times$ for some $\bb \in \z_p^n$, then by the Local Square Theorem and \cite[63:8]{om} again the polynomial $Q(\bb)x^2 + 2B(\bv, \bb)x$ represents all of $p\z_p$.

The argument when $p = 2$ is along the same line and uses both the Local Square Theorem and \cite[63:8]{om} in the same manner.  If $\mathfrak b = \frac{1}{2}\z$, then there exists $\ba \in \z_2^n$ such that both $2Q(\ba)$ and $4 B(\bv, \ba)$ are in $\z_2^\times$.  In this case, the polynomial $Q(\ba)x^2 + 2B(\bv, \ba)x$ represents every element in $\z_2$.  Suppose that $\mathfrak b = \z$.  Then, $2B(\bv, \bb) \in \z_2^\times$ for some $\bb \in \z_2^n$, and the polynomial $Q(\bb)x^2 + 2B(\bv, \bb)x$ represents $2\z_2$.

We are left with the case when $2 \mid \mathfrak b$.  In this case, there is a vector $\bc \in \z_2^n$ such that $2B(\bv, \bc)$ is divisible by 2 but $Q(\bc)$ is in $\z_2^\times$.  We further divide the discussion into three subcases: $\ord_2(2B(\bv, \bc)) = 1$, $\ord_2(2B(\bv, \bc)) = 2$, or $\ord_2(2B(\bv, \bc)) \geq 3$.  Let us look at the subcase $\ord_2(2B(\bv, \bc)) = 2$.  If $\epsilon \equiv Q(\bc) + B(\bv, \bc)^2Q(\bc)^{-1}$ mod 8, then
\begin{eqnarray*}
B(\bv, \bc)^2 + Q(\bc)\epsilon \equiv Q(\bc)^2 + 2B(\bv, \bc)^2 \equiv Q(\bc)^2 \mbox{ mod } 8
\end{eqnarray*}
Therefore, $B(\bv, \bc)^2 + Q(\bc)\epsilon$ is the square of a 2-adic unit, and hence the polynomial $Q(\bc)x^2 + 2B(\bv, \bc) x$ represents $\epsilon$ over $\z_2$.  This shows that $f(\bx)$ represents a coset modulo $8\z_2$.  The other two subcases are treated similarly and we leave their proofs to the readers.
\hfill $\square$\end{proof}

\begin{rmk}
If $f(\bx)$ is primitive and complete, then by Lemma \ref{4c} the integer $k$ in Lemma \ref{localrep} is bounded above by a constant depending only on $c$.
\end{rmk}

\begin{defn}
A positive quadratic polynomial $f(\bx)$ is called {\em Minkowski reduced}, or simply {\em reduced}, if its quadratic part $Q(\bx)$ is Minkowski reduced\footnote{See \cite[Chapter 12]{ca} for the definition of Minkowski reduced quadratic forms.} and $f(\bx)$ itself attains its minimum at the zero vector.
\end{defn}

Since every positive definite quadratic form is equivalent to a Minkowski reduced quadratic form \cite[Chapter 12, Theorem 1.1]{ca}, it follows easily that every positive quadratic polynomial is equivalent to a reduced quadratic polynomial (see \cite[Lemma 2.2]{co2} for the case of ternary quadratic polynomials).

If $f(\bx) = Q(\bx) + 2B(\bv, \bx) + m$ is a reduced quadratic polynomial and $\{\be_1, \ldots, \be_n\}$ is the standard basis of $\z^n$, then $Q(\bx)$ is Minkowski reduced, $2\vert B(\bv, \be_i)\vert \leq Q(\be_i)$ for  $i = 1,\ldots, n$, and $m$ is the smallest integer represented by $f(\bx)$.  In the special case
when $f(\bx)$ is ternary, $Q(\be_1) \leq Q(\be_2) \leq Q(\be_3)$ are the successive minima of $Q(\bx)$ \cite[Page 285]{vander}.

\begin{prop} \label{inequality}
Suppose that $f(\bx)$ is a positive reduced ternary quadratic polynomial.  Let $\mu_1, \mu_2, \mu_3$ be the successive minima of the quadratic part of $f(\bx)$, and $\ba = (a_1, a_2, a_3)$ be a vector in $\z^3$.
\begin{enumerate}
\item[(a)] If $|a_3| \geq 9$, then $f(\ba)\geq \frac{3}{2} \mu_3$.

\item[(b)] If $|a_3| \leq 8$ and $|a_2| \geq 22$, then $f(\ba) \geq \frac{7}{2} \mu_2$.

\item[(c)] If $|a_3| \leq 8$ and $|a_2| \leq 21$, then $f(\ba) \geq \mu_1(a_1^2 -30|a_1|)$.

\item[(d)] If $|a_3| \leq 8$,  $|a_2| \leq 21$, and $|a_1| \geq 31$, then $f(\ba) \geq 31 \mu_1$.
\end{enumerate}
Consequently,
$$f(\ba) \geq \min \left\{\frac{3}{2} \mu_3,\, \frac{7}{2} \mu_2,\, 31\mu_1\right\}$$
unless $|a_1| \leq 30$,  $|a_2| \leq 21$, and $|a_3| \leq 8$.
\end{prop}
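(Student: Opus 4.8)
My plan is to pass from $f$ to its homogenisation $f_0(\bx)=Q(\bx)+2B(\bv,\bx)=Q(\bx+\bv)-Q(\bv)$: since $f$ is positive and reduced, its minimum $m=f(\mathbf 0)$ is nonnegative, so $f(\ba)\ge f_0(\ba)$ and it suffices to bound $f_0(\ba)$ from below. Throughout I write $\mu_i=Q(\be_i)$ and use the three consequences of reducedness: the Minkowski inequalities $2|B(\be_i,\be_j)|\le\mu_{\min(i,j)}$; the inequalities $2|B(\bv,\be_i)|\le\mu_i$; and the fact that $f_0$ is nonnegative on every coordinate subspace through the origin (because $f(\mathbf 0)$ is the global minimum of $f$). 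I would prove (c) first, deduce (d) from it, then treat (b), and finally (a); the ``Consequently'' clause is then immediate, because if $(|a_1|,|a_2|,|a_3|)$ is not coordinatewise $\le(30,21,8)$ then one of the hypotheses of (a), (b), (d) holds.

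For (c), group $f_0(\ba)$ around $a_1$:
$$f_0(\ba)=\mu_1a_1^2+2a_1\bigl(B(\be_1,\be_2)a_2+B(\be_1,\be_3)a_3+B(\bv,\be_1)\bigr)+f_0(0,a_2,a_3).$$
The last term is $\ge0$, and the parenthesised coefficient has absolute value $\le\tfrac{\mu_1}{2}(|a_2|+|a_3|+1)\le15\mu_1$ when $|a_2|\le21$ and $|a_3|\le8$; hence $f_0(\ba)\ge\mu_1a_1^2-30\mu_1|a_1|$, which is (c). If moreover $|a_1|\ge31$ then $\mu_1(a_1^2-30|a_1|)=\mu_1|a_1|(|a_1|-30)\ge31\mu_1$, giving (d).

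For (b), complete the square in $a_1$ in the displayed identity: writing $L$ for the parenthesised coefficient, $f_0(\ba)\ge f_0(0,a_2,a_3)-L^2/\mu_1$, and $L^2/\mu_1\le\tfrac{\mu_1}{4}(|a_2|+|a_3|+1)^2\le\tfrac{\mu_2}{4}(|a_2|+9)^2$ using $\mu_1\le\mu_2$ and $|a_3|\le8$. Expanding $f_0(0,a_2,a_3)$ and discarding the nonnegative summand $\mu_3(a_3^2-|a_3|)$ gives $f_0(0,a_2,a_3)\ge\mu_2|a_2|(|a_2|-|a_3|-1)\ge\mu_2|a_2|(|a_2|-9)$. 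Combining, $f_0(\ba)\ge\mu_2\bigl(\tfrac34|a_2|^2-\tfrac{27}{2}|a_2|-\tfrac{81}{4}\bigr)$, and this quadratic in $|a_2|$ is increasing for $|a_2|\ge9$ and already exceeds $\tfrac72$ at $|a_2|=22$, which is (b).

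Part (a) is the step I expect to be the real obstacle, since --- unlike (b)--(d) --- it requires a lower bound purely in terms of $\mu_3$ and cannot be extracted from the off-diagonal inequalities alone. Here I would invoke Minkowski's inequality $\det Q\ge\tfrac12\mu_1\mu_2\mu_3$ for reduced ternary forms; together with $\mu_1\mu_2-B(\be_1,\be_2)^2\le\mu_1\mu_2$ and the Schur-complement evaluation of $\min_{x_1,x_2}Q(x_1,x_2,x_3)$, this yields $Q(\bx)\ge\tfrac12\mu_3x_3^2$ for every $\bx$. Next, reducedness of $f$ says precisely that $Q(\ba+\bv)\ge Q(\bv)$ for all $\ba\in\z^3$, so $Q(\bv)$ does not exceed the squared covering radius of $\z^3$ relative to $Q$, which the reduction inequalities bound above by $\tfrac14(3\mu_1+2\mu_2+\mu_3)\le\tfrac32\mu_3$. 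Applying the triangle inequality for the norm $\sqrt{Q(\cdot)}$,
$$\sqrt{Q(\ba+\bv)}\ \ge\ \sqrt{Q(\ba)}-\sqrt{Q(\bv)}\ \ge\ \Bigl(\tfrac{|a_3|}{\sqrt2}-\sqrt{3/2}\Bigr)\sqrt{\mu_3},$$
so for $|a_3|\ge9$ the right-hand side is positive; squaring and subtracting $Q(\bv)$ gives $f_0(\ba)\ge\bigl(\tfrac{|a_3|}{\sqrt2}-\sqrt{3/2}\bigr)^2\mu_3-\tfrac32\mu_3$, which exceeds $\tfrac32\mu_3$ comfortably. This settles all four parts, and the final inequality follows from the case split recorded in the first paragraph.
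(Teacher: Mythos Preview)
Your argument is correct.  The initial reduction---replacing $f$ by $f_0$ and using that $f_0$ is still reduced with minimum $0$---is exactly what the paper does; the paper then simply refers to \cite[Theorem~1.1, p.~35]{co2} for the remaining computations, whereas you supply them explicitly.  Parts (c), (d), and (b) go through just as you wrote; the estimate in (b) is generous ($45.75\mu_2$ versus the required $\tfrac72\mu_2$) but that is harmless.

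Your treatment of (a) is a clean alternative to whatever direct estimation appears in \cite{co2}.  The three ingredients all check out: the Schur-complement identity $\min_{x_1,x_2}Q(x_1,x_2,x_3)=(\det Q/\det A)\,x_3^2$ together with $\det A\le\mu_1\mu_2$ and the sharp Minkowski bound $\det Q\ge\tfrac12\mu_1\mu_2\mu_3$ (attained on $A_3$) indeed gives $Q(\bx)\ge\tfrac12\mu_3x_3^2$; the covering-radius step amounts to choosing a translate $\bv'$ of $\bv$ with basis-coordinates in $[-\tfrac12,\tfrac12]$ and expanding, which yields $Q(\bv)\le Q(\bv')\le\tfrac14(3\mu_1+2\mu_2+\mu_3)\le\tfrac32\mu_3$; and for $|a_3|\ge9$ the quantity $(\tfrac{|a_3|}{\sqrt2}-\sqrt{3/2})^2-\tfrac32$ already exceeds $24$, far more than the $\tfrac32$ needed.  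One cosmetic remark: the covering-radius language is not really needed---you only use that $Q(\bv)$ is bounded by $Q$ evaluated at some point of the box $[-\tfrac12,\tfrac12]^3$, which follows directly from $Q(\bv)\le Q(\ba+\bv)$ for all $\ba\in\z^3$.
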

\begin{proof}
Let $f_0(\bx)$ be the quadratic polynomial obtained by taking away the constant term from $f(\bx)$.  Then $f_0(\bx)$ is also reduced and $f(\bx) \geq f_0(\bx)$ for all $\bx \in \z^3$.  Therefore, we may assume that the minimum of $f(\bx)$ is 0.  The proof for this special case can be easily extracted from the proof of \cite[Theorem 1.1]{co2} (see \cite[Page 35]{co2} in particular).
\hfill $\square$\end{proof}

\section{Some Technical Lemmas}

As is in the proof of the finiteness of regular ternary triangular forms \cite[Theorem 1.2]{co2}, we need estimates of the number of integers in an interval that satisfy various local conditions.

\begin{lem}\cite[Lemma 3.4]{co2} \label{kkolemma}
Let $T$ be a finite set of primes and $a$ be an integer not divisible by any prime in $T$.  For any integer $d$, the number of integers in the set $\{d, a + d, \ldots, (n-1)a + d \}$ that are not divisible by any prime in $T$ is at least
$$n\frac{\tilde{p}-1}{\tilde{p} + t - 1} - 2^t + 1,$$
where $t = \vert T \vert$ and $\tilde{p}$ is the smallest prime in $T$.
\end{lem}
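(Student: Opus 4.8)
The plan is to count directly via an inclusion–exclusion argument combined with a crude floor estimate. Consider the arithmetic progression $d, a+d, \ldots, (n-1)a+d$, which has exactly $n$ terms. Since $a$ is coprime to every prime in $T$, reduction modulo any $q \in T$ is a bijection on the residues appearing in this progression in blocks of length $q$; more precisely, among any $q$ consecutive terms exactly one is divisible by $q$. Hence the number of terms divisible by a fixed $q \in T$ is either $\lfloor n/q \rfloor$ or $\lceil n/q \rceil$, and in all cases at most $n/q + 1$. More generally, for a subset $S \subseteq T$ the terms divisible by every prime in $S$ form a sub-progression with common difference $a \prod_{q \in S} q$ (using coprimality and the Chinese Remainder Theorem), so their count is at most $n / \prod_{q\in S} q + 1$ and at least $n/\prod_{q \in S} q - 1$.

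First I would set up inclusion–exclusion: writing $N$ for the number of terms divisible by no prime in $T$, we have
$$N = \sum_{S \subseteq T} (-1)^{|S|} \#\{\text{terms divisible by all }q \in S\}.$$
The main term is $\sum_{S}(-1)^{|S|} n/\prod_{q\in S}q = n\prod_{q \in T}(1 - 1/q)$, and the error from replacing each count by its main value is at most $1$ per subset, i.e. at most $2^t$ in absolute value; being slightly more careful (the empty set contributes exactly $n$ with no error) one gets $N \geq n\prod_{q\in T}(1-1/q) - (2^t - 1)$. The remaining step is to bound the product $\prod_{q \in T}(1 - 1/q)$ from below by $\frac{\tilde p - 1}{\tilde p + t - 1}$, where $\tilde p$ is the smallest prime in $T$. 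This is where the elementary inequality enters: since $T$ consists of $t$ distinct primes all at least $\tilde p$, and the smallest possible such primes are $\tilde p, \tilde p + 1, \ldots$ (using all integers, not just primes, only weakens the bound), one checks
$$\prod_{q \in T}\left(1 - \frac{1}{q}\right) \geq \prod_{j=0}^{t-1}\left(1 - \frac{1}{\tilde p + j}\right) = \prod_{j=0}^{t-1}\frac{\tilde p + j - 1}{\tilde p + j} = \frac{\tilde p - 1}{\tilde p + t - 1}$$
by telescoping. Combining the two displays yields the claimed lower bound $n\frac{\tilde p - 1}{\tilde p + t - 1} - 2^t + 1$.

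The main obstacle is getting the error term in inclusion–exclusion down to exactly $2^t - 1$ rather than $2^t$ or worse: one must track signs carefully and exploit that the trivial subset $S = \emptyset$ contributes the exact value $n$, and that the $\pm 1$ rounding errors for the $2^t - 1$ nonempty subsets can only accumulate to $2^t - 1$ in the worst case. The product estimate is routine once one observes that substituting consecutive integers for the primes of $T$ can only decrease the product, after which the telescoping is immediate. Since this lemma is quoted verbatim from \cite[Lemma 3.4]{co2}, a full reproduction of the bookkeeping is unnecessary; the argument above is the natural route.
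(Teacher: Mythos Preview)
Your argument is correct: inclusion--exclusion over subsets of $T$ together with the bound $|A_S - n/\prod_{q\in S}q| < 1$ for each nonempty $S$ gives $N \geq n\prod_{q\in T}(1-1/q) - (2^t-1)$, and the telescoping estimate $\prod_{q\in T}(1-1/q) \geq (\tilde p - 1)/(\tilde p + t - 1)$ finishes it. The paper does not supply its own proof of this lemma but simply cites \cite[Lemma~3.4]{co2}, and your approach is exactly the standard one used there.
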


Let $\chi_1, \ldots, \chi_r$ be quadratic characters modulo $k_1, \ldots, k_r$, respectively, $\Gamma$ be the least common multiple of $k_1, \ldots, k_r$, and $u_1, \ldots, u_r$ be values taken from the set $\{\pm 1\}$.  Given a nonnegative number $s$ and a positive number $H$, let $\mathcal S_s(H)$ be the number of integers $n$ in the interval $(s, s + H)$ which satisfy the conditions
\begin{equation}\label{condition1}
\chi_i(n) = u_i \quad \mbox{ for } i = 1, \ldots, r \quad \text{ and } \quad \gcd(n, X) = 1,
\end{equation}
where $X$ is a positive integer relatively prime to $\Gamma$.

Let $I$ be a sequence of parameters.  An inequality of the form $A \ll_I B$ will mean that there exists a constant $\kappa$, depending only on the parameters in $I$, such that $\vert A \vert \leq  \kappa B$.  Alternatively, we will write $A = B + O_I(C)$ if $A - B \ll_I C$.  If $I$ is empty, then we will simply use $\ll$ and $O$ instead, and the implied constant $\kappa$ in this case will be an absolute constant.

The following proposition is essentially \cite[Proposition 3.6]{co2}.\footnote{In the statement of \cite[Proposition 3.6]{co2}, it requires $r \leq \omega(\Gamma) + 1$, where $\omega(\Gamma)$ is the number of distinct prime divisors of $\Gamma$.  However, as the referee suggested to us, this inequality holds as a consequence of the independence of the characters.}

\begin{prop} \label{eresult}
Suppose that $\chi_1, \ldots, \chi_r$ are independent.  Let $k$ be a fixed positive integer and $h = \min \{H : \mathcal S_s(H) > k \}$.  Then
\begin{equation} \label{e3.2}
\mathcal S_s(H) = 2^{-r}\frac{\phi(\Gamma X)}{\Gamma X}H + O_\epsilon\!\!\left(H^{\frac{1}{2}}\Gamma^{\frac{3}{16} + \epsilon} X^\epsilon\right),
\end{equation}
and 
\begin{equation} \label{e3.3}
h \ll_{\epsilon, k} \Gamma^{\frac{3}{8} + \epsilon}X^\epsilon,
\end{equation}
where $\phi$ is Euler's phi-function.
\end{prop}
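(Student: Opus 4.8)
The plan is to detect the congruence-type conditions in \eqref{condition1} by an orthogonality-type expansion and then appeal to standard character-sum estimates. First I would rewrite the indicator of the condition $\chi_i(n) = u_i$ as $\tfrac12(1 + u_i\chi_i(n))$ on integers coprime to $k_i$, and the indicator of $\gcd(n,X)=1$ by a M\"obius expansion $\sum_{d \mid (n,X)}\mu(d)$. Multiplying these out, $\mathcal S_s(H)$ becomes a sum over $d \mid X$ and over subsets $S \subseteq \{1,\ldots,r\}$ of terms of the shape $2^{-r}\mu(d)\bigl(\prod_{i\in S}u_i\bigr)\sum_{s < n < s+H,\ d\mid n}\psi_S(n)$, where $\psi_S = \prod_{i\in S}\chi_i$ is a character modulo the least common multiple $\Gamma_S$ of the $k_i$ with $i\in S$ (so $\Gamma_S \mid \Gamma$). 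The ``diagonal'' term, namely $d=1$ and $S=\varnothing$, contributes $2^{-r}(\#\{s<n<s+H : \gcd(n,\Gamma X)=1 \text{ is not imposed here}\})$; more precisely, after reabsorbing the coprimality to $X$ through the M\"obius sum, the main term is $2^{-r}\tfrac{\phi(\Gamma X)}{\Gamma X}H$, since the principal character contributions across all $d\mid X$ combine to count integers in an interval of length $H$ lying in $\phi(X)$ residue classes mod $X$ (times the $2^{-r}$ from the quadratic conditions). Here I use that $\chi_1,\ldots,\chi_r$ are \emph{independent}, so that $\psi_S$ is a non-principal character whenever $S\neq\varnothing$; this is exactly the hypothesis that guarantees there is no spurious main term from a nontrivial $S$ with $\psi_S$ principal.

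Next I would bound each off-diagonal term. For fixed $d\mid X$ and $S\neq\varnothing$, the inner sum is $\sum_{s<n<s+H,\ d\mid n}\psi_S(n) = \psi_S(d)\sum_{s/d < m < (s+H)/d}\psi_S(m)$ when $\gcd(d,\Gamma_S)=1$ (which holds since $\gcd(X,\Gamma)=1$), a character sum for the non-principal character $\psi_S$ modulo $\Gamma_S \le \Gamma$ over an interval of length $H/d$. I would invoke the Burgess bound: for any $\epsilon>0$, $\sum_{M<m\le M+N}\psi(m) \ll_\epsilon N^{1/2}q^{3/16+\epsilon}$ for a non-principal character $\psi$ mod $q$ (this is the source of the exponent $3/16$; it is the Burgess estimate for a single short interval, valid in the ranges that occur, and for the remaining ranges one uses the trivial bound $q$ which is absorbed). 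Summing over the $2^r - 1$ choices of $S$ (a bounded number, since $r \le \omega(\Gamma)+1$ by independence) and over $d\mid X$ (contributing a divisor-function factor $\ll_\epsilon X^\epsilon$, and the $d^{-1/2}$ weights only help), the total off-diagonal contribution is $O_\epsilon(H^{1/2}\Gamma^{3/16+\epsilon}X^\epsilon)$, giving \eqref{e3.2}.

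Finally, \eqref{e3.3} follows by choosing $H$ just large enough to force $\mathcal S_s(H) > k$. From \eqref{e3.2}, $\mathcal S_s(H) \ge 2^{-r}\tfrac{\phi(\Gamma X)}{\Gamma X}H - \kappa_\epsilon H^{1/2}\Gamma^{3/16+\epsilon}X^\epsilon$; this is $> k$ as soon as $H$ exceeds a quantity of order $\Gamma^{3/8+\epsilon'}X^{\epsilon'}$ (balancing the main term against the square root of the error term, and absorbing the $k$ and the factor $2^r\tfrac{\Gamma X}{\phi(\Gamma X)} \ll_\epsilon (\Gamma X)^\epsilon$ into the implied constant), whence $h \ll_{\epsilon,k}\Gamma^{3/8+\epsilon}X^\epsilon$. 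I expect the main obstacle to be bookkeeping rather than anything deep: one must make sure the independence hypothesis is used precisely where needed (to rule out non-principal-looking but actually principal $\psi_S$), that the coprimality condition $\gcd(X,\Gamma)=1$ is invoked when splitting off the $d\mid X$ factor from the modulus-$\Gamma_S$ character, and that the power of $X$ in the error stays at $X^\epsilon$ — the divisor sum over $d\mid X$ is what produces it, and this is why the statement does not claim a clean power of $X$. Since this proposition is stated to be ``essentially \cite[Proposition 3.6]{co2},'' I would in practice cite that result and only indicate the modification needed to handle the coprimality-to-$X$ condition and the observation about $r \le \omega(\Gamma)+1$.
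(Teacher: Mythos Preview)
Your proposal is correct and, as you yourself anticipate at the end, matches what the paper actually does: the paper gives no proof of this proposition, simply stating that it ``is essentially \cite[Proposition 3.6]{co2}'' and adding (in a footnote) the observation that the hypothesis $r \le \omega(\Gamma)+1$ from \cite{co2} is automatic from the independence of the characters. Your sketch --- expand the indicator of \eqref{condition1} via $\tfrac12(1+u_i\chi_i(n))$ and a M\"obius sum over $d\mid X$, isolate the main term from $S=\varnothing$, and bound the remaining nonprincipal character sums by Burgess --- is a correct and standard way to recover the cited result, and your derivation of \eqref{e3.3} from \eqref{e3.2} is exactly the balancing argument one expects (and parallels what the paper carries out explicitly in the proof of the companion Proposition \ref{charactersum}).
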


We will need another similar result which estimates $\mathcal S'_s(H)$,  the number of integers in $(s, s + H)$ which satisfy the following stronger conditions
\begin{equation}\label{condition2}
\chi_i(n) = u_i \quad \mbox{ for } i = 1, \ldots, r \quad \text{ and }\quad  n \equiv \tau \mbox{ mod } X,
\end{equation}
where $\tau$ is a fixed integer relatively prime to $X$.

\begin{prop}\label{charactersum}
Suppose that $\chi_1, \ldots, \chi_r$ are independent.  Let $k$ be a fixed positive integer and $h = \min \{H : \mathcal S'_s(H) > k \}$.  Then
\begin{equation} \label{sum1}
\mathcal S'_s(H) = 2^{-r}\frac{\phi(\Gamma)}{\Gamma X}H + O_\epsilon\!\!\left(H^{\frac{1}{2}}(\Gamma X)^{\frac{3}{16} + \epsilon}\right),
\end{equation}
and 
\begin{equation} \label{sum2}
h \ll_{\epsilon, k} \Gamma^{\frac{3}{8} + \epsilon}X^{\frac{19}{8} + \epsilon},
\end{equation}
where $\phi$ is Euler's phi-function.
\end{prop}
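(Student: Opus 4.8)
The plan is to run the circle method of additive characters replaced by orthogonality of Dirichlet characters, exactly as in the proof of Proposition \ref{eresult} (that is, of \cite[Proposition 3.6]{co2}); the only genuinely new feature is that the coprimality condition $\gcd(n,X)=1$ there is sharpened to the congruence $n\equiv\tau\bmod X$ with $\gcd(\tau,X)=1$. Since $\gcd(\tau,X)=1$, this congruence already forces $\gcd(n,X)=1$, and as $X$ is relatively prime to $\Gamma$ it is natural to detect it with the multiplicative characters modulo $X$: for every integer $n$,
$$\mathbf{1}[n\equiv\tau\bmod X]=\frac{1}{\phi(X)}\sum_{\psi\bmod X}\overline{\psi}(\tau)\,\psi(n),$$
both sides vanishing when $\gcd(n,X)>1$. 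Likewise, writing $\chi_i^{0}$ for the principal character modulo $k_i$, the condition $\chi_i(n)=u_i$ (which already forces $\gcd(n,k_i)=1$) is detected by $\tfrac12\bigl(\chi_i^{0}(n)+u_i\chi_i(n)\bigr)$. Multiplying the detectors and expanding gives
$$\mathcal S'_s(H)=\frac{1}{2^{r}\phi(X)}\sum_{\substack{S\subseteq\{1,\dots,r\}\\ \psi\bmod X}}\Bigl(\prod_{i\in S}u_i\Bigr)\overline{\psi}(\tau)\sum_{s<n<s+H}\Psi_{S,\psi}(n),$$
where, because $\gcd(\Gamma,X)=1$, the function $\Psi_{S,\psi}$ is a Dirichlet character modulo $\Gamma X$, namely the product of $\prod_{i\in S}\chi_i$ (viewed mod $\Gamma$) with $\psi$. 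The term with $S=\emptyset$ and $\psi$ principal contributes $\frac{1}{2^{r}\phi(X)}\#\{s<n<s+H:\gcd(n,\Gamma X)=1\}$, which by inclusion–exclusion and $\phi(\Gamma X)=\phi(\Gamma)\phi(X)$ equals $2^{-r}\frac{\phi(\Gamma)}{\Gamma X}H+O_\epsilon((\Gamma X)^{\epsilon})$, the main term of \eqref{sum1}.

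Next I would bound the remaining terms. For every pair $(S,\psi)\neq(\emptyset,\psi_0)$ the character $\Psi_{S,\psi}$ is non-principal: $\prod_{i\in S}\chi_i$ is non-principal for $S\neq\emptyset$ by the independence of $\chi_1,\dots,\chi_r$, $\psi$ is non-principal for $\psi\neq\psi_0$, and a character modulo $\Gamma X$ with $\gcd(\Gamma,X)=1$ is principal only if both of its components are. Inducing such a $\Psi_{S,\psi}$ from its underlying primitive character, whose conductor divides $\Gamma X$, and stripping off the principal part by Möbius inversion over the divisors of $\Gamma X$ — which costs a factor $2^{\omega(\Gamma X)}\ll_\epsilon(\Gamma X)^{\epsilon}$ — the Burgess bound (with the parameter $\ell=2$, the same input that yields the exponent $3/16$ in \eqref{e3.2}) gives $\bigl|\sum_{s<n<s+H}\Psi_{S,\psi}(n)\bigr|\ll_\epsilon H^{1/2}(\Gamma X)^{3/16+\epsilon}$. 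Since the $2^{r}\phi(X)$ terms carry the normalizing factor $\frac{1}{2^{r}\phi(X)}$, their total contribution is at most the maximum of the individual bounds, namely $O_\epsilon\bigl(H^{1/2}(\Gamma X)^{3/16+\epsilon}\bigr)$. This proves \eqref{sum1}.

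To deduce \eqref{sum2}, I would use $r\le\omega(\Gamma)+1$ (the footnoted consequence of independence), so $2^{r}\ll_\epsilon\Gamma^{\epsilon}$ and $4^{r}\ll_\epsilon\Gamma^{\epsilon}$, together with $\Gamma/\phi(\Gamma)\ll_\epsilon\Gamma^{\epsilon}$. By \eqref{sum1} there is $C_\epsilon$ with $\mathcal S'_s(H)\ge 2^{-r}\frac{\phi(\Gamma)}{\Gamma X}H-C_\epsilon H^{1/2}(\Gamma X)^{3/16+\epsilon}$, and this exceeds $k$ whenever $2^{-r-1}\frac{\phi(\Gamma)}{\Gamma X}H$ exceeds both $k$ and $C_\epsilon H^{1/2}(\Gamma X)^{3/16+\epsilon}$. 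The first holds for $H\gg_{\epsilon,k}\Gamma^{\epsilon}X$; the second holds once $H^{1/2}\gg_\epsilon 2^{r}\tfrac{\Gamma X}{\phi(\Gamma)}(\Gamma X)^{3/16+\epsilon}$, that is, $H\gg_\epsilon 4^{r}\tfrac{(\Gamma X)^{2}}{\phi(\Gamma)^{2}}(\Gamma X)^{3/8+\epsilon}$, and one checks $4^{r}\tfrac{(\Gamma X)^{2}}{\phi(\Gamma)^{2}}(\Gamma X)^{3/8+\epsilon}\ll_\epsilon\Gamma^{3/8+\epsilon}X^{19/8+\epsilon}$ since $\tfrac{\Gamma^{2}}{\phi(\Gamma)^{2}}\ll_\epsilon\Gamma^{\epsilon}$ and $2+\tfrac38=\tfrac{19}{8}$. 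Taking $H$ just above this threshold yields $h\ll_{\epsilon,k}\Gamma^{3/8+\epsilon}X^{19/8+\epsilon}$.

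The only place where the argument genuinely differs from Proposition \ref{eresult} — and the step I expect to require the most care — is that detecting a fixed residue class modulo $X$ (rather than mere coprimality, which could be peeled off by a trivial divisor sum) forces $X$ into the modulus of the character sums; consequently $X$ enters the Burgess estimate to the power $3/16$, and after solving the resulting quadratic inequality for $H$ this is precisely what produces the exponent $\tfrac{19}{8}=2+\tfrac38$ on $X$ in \eqref{sum2}. The remaining technicalities — passing to primitive characters, and absorbing the factors $2^{\omega(\Gamma X)}$ and $2^{r}$ into $\epsilon$-powers — are routine and are handled exactly as in \cite{co2}.
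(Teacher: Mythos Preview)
Your proposal is correct and follows essentially the same approach as the paper: both detect the congruence $n\equiv\tau\bmod X$ by averaging over Dirichlet characters modulo $X$, expand the product $\prod_j\tfrac12(1+u_j\chi_j)$ over subsets, isolate the principal term as the count of integers coprime to $\Gamma X$, and bound the remaining non-principal character sums by Burgess; the deduction of \eqref{sum2} from \eqref{sum1} via $2^r\ll_\epsilon\Gamma^\epsilon$ and $\Gamma/\phi(\Gamma)\ll_\epsilon\Gamma^\epsilon$ is also the same. Your write-up is in fact slightly more careful than the paper's in two places---you use $\chi_i^{0}$ rather than the literal constant $1$ in the detector (the paper tacitly does this too, since its main term restricts to $(n,\Gamma X)=1$), and you are explicit about the passage to primitive characters before invoking Burgess---but these are cosmetic refinements of the same argument.
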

\begin{proof}
By the orthogonality of Dirichlet characters, we can express $\mathcal S'_s(H)$ as
\begin{eqnarray*}
\mathcal S'_s(H) & = & \sum_{s<n<s + H} \prod_{j=1}^r \frac{1}{2}(1 + u_j\chi_j(n)) \frac{1}{\phi(X)} \sum_{\psi \!\!\!\!\mod X} \overline{\psi}(\tau)\psi(n) \\
    & = & \frac{1}{2^r\phi(X)} \sum_{\psi\!\!\!\! \mod X} \overline{\psi}(\tau) \sum_{R\subseteq \{1, \ldots, r\}} \left( \prod_{j \in R} u_j \right) \sum_{s<n<s+H} \psi(n) \prod_{j\in R}\chi_j(n).
\end{eqnarray*}

If we let $\psi_0$ be the principal character modulo $X$, then the contribution of $\psi = \psi_0$ and $R = \emptyset$ is obtained from Proposition \ref{eresult} with $r = 0$ to get
$$\frac{1}{2^r\phi(X)} \underset{(n, \Gamma X) = 1}{\sum_{s<n<s+H}} 1 = \frac{\phi(\Gamma)}{2^r \Gamma X} H + \frac{1}{2^r\phi(X)}O_\epsilon\left( H^{\frac{1}{2}}(\Gamma X)^{\epsilon} \right).$$
In all other cases the characters $\psi \prod_{j\in R} \chi_j$ is nontrivial, so that by Burgess' estimate \cite[Theorem 2]{b} they together contribute
$$\frac{2^r\phi(X)- 1}{2^r\phi(X)}\, O_\epsilon\left( H^{\frac{1}{2}} (\Gamma X)^{\frac{3}{16} + \epsilon}\right)$$
to the count.  Adding this to the contribution of $\psi_0$ gives us \eqref{sum1} which is
$$\mathcal S'_s(H) = \frac{\phi(\Gamma)}{2^r\Gamma X}H + O_\epsilon\!\!\left(H^{\frac{1}{2}}(\Gamma X)^{\frac{3}{16} + \epsilon}\right).$$
From this it is easy to see that $S'_s(H) > k$ whenever 
$$\frac{\phi(\Gamma)}{2^r\Gamma X}H - C\Gamma^{\frac{3}{16} + \epsilon}H^{\frac{1}{2}} - k > 0$$
for some positive constant $C$.
 Let $\omega(\Gamma)$ denote the number of distinct prime divisors of $\Gamma$ and $\tau (\Gamma)$ denote the number of positive divisors of $\Gamma$ and observe that $\frac{\Gamma}{\phi(\Gamma)} \ll_\epsilon \Gamma^\epsilon$, $r \ll \omega(\Gamma)$, and $2^r \ll 2^{\omega(\Gamma)} \leq \tau(\Gamma) \ll_\epsilon \Gamma^\epsilon$.  It is now straight forward to deduce \eqref{sum2}. We leave that to the readers.
\hfill $\square$\end{proof}

\begin{rmk}
	We note that although Polya's estimate of character sum suffices in our present argument, we choose to use Burgess' in order to obtain a sharper estimate on the error terms, so that we have a better idea on how much is needed to improve on the other estimates in order to obtain better results.
\end{rmk}

\section{Watson Transformations}

We will describe a family of regularity preserving transformations on complete quadratic polynomials.    The definition and properties of these transformations are best explained in the geometric language of quadratic spaces and lattices, and this is the language we choose to conduct all our subsequent discussions.  The books \cite{k} and \cite{om} are standard references for quadratic spaces and lattices.    Any other unexplained notations and terminologies used later in this paper can be found in either of them.

Let $R$ be a PID, and $(V,Q)$ be a nondegenerate quadratic space over the field of fractions of $R$.  If $L$ is an $R$-lattice on $V$ and $A$ is a symmetric matrix, we shall write ``$L \cong A$" if $A$ is the Gram matrix for $L$ with respect to some basis of $L$.  The discriminant of $L$, denoted $d(L)$, is defined to be the determinant of one of its Gram matrices.  An $n\times n$ diagonal matrix with $a_1, \ldots, a_n$ as its diagonal entries is written as $\langle a_1, \ldots, a_n\rangle$.

An $R$-coset on $V$ is a set $L + \bv$, where $L$ is an $R$-lattice on $V$ and $\bv$ is a vector in $V$.  We define the discriminant of $L + \bv$ to be $d(L)$, the discriminant of $L$.  The conductor of $L + \bv$ is the fractional ideal $\{a \in R : a\bv \in L\}$.  In the case $R = \z$,  the conductor has a positive generator and we will abuse the terminology and call this number the conductor of $L + \bv$.   The $R$-coset $L + \bv$ is {\em integral} if the fractional ideal generated by $Q(L + \bv)$, denoted $\norm(L + \bv)$, is contained in $R$;  and is {\em primitive} if $\norm(L + \bv) = R$.  Two $R$-cosets $L + \bv$ and $M + \bw$ on $V$ and $W$ respectively are said to be {\em isometric}, written $L + \bv \cong M + \bw$, if there exists an isometry $\sigma: V \longrightarrow W$ such that $\sigma(L + \bv) = M + \bw$.  This is the same as requiring $\sigma(L) = M$ and $\sigma(\bv) \in M + \bw$.  It is clear that the conductor of an $R$-coset is a class invariant.

A $\z$-coset is called {\em positive} if the underlying quadratic space is positive definite.  We say that a rational number $a$ is represented by a $\z$-coset $L + \bv$ if there exists $\ba \in L$ such that $Q(\ba + \bv) = a$.  For each prime $p$, the representation of a $p$-adic number by a $\z_p$-coset is defined in the obvious way.  A rational number $a$ is represented by the genus of a $\z$-coset $L + \bv$ if it is represented by $V_\infty$ and by $L_p + \bv$ for every prime $p$.  The $\z$-coset $L + \bv$ is called {\em regular} if it represents all rational numbers that are represented by its genus.  The readers are referred to \cite{co2} for more discussion on representations of numbers by $\z$-cosets in general.

Let $L + \bv$ be a $\z$-coset on $V$.  Fix a basis $\{\be_1, \ldots, \be_n\}$ of $L$.  For any $(x_1, \ldots, x_n) \in \z^n$, we have
\begin{equation*}
Q(x_1\be_1 + \cdots + x_n\be_n + \bv) = \sum_{i=1}^n\sum_{j = 1}^n B(\be_i, \be_j)x_ix_j + \sum_{\ell = 1}^n 2B(\bv, \be_\ell)x_\ell + Q(\bv)
\end{equation*}
which is an $n$-ary complete quadratic polynomial.  Conversely, given an $n$-ary complete quadratic polynomial $f(\bx) = Q(\bx) + 2B(\bv, \bx) + Q(\bv)$, the set $\z^n + \bv$ is a $\z$-coset on the quadratic space $\q^n$ equipped with the quadratic form $Q$.  Thus we have a correspondence between $\z$-cosets and complete quadratic polynomials.   Under this correspondence,  primitive regular complete quadratic polynomials correspond to primitive regular $\z$-cosets, and the conductor of a complete quadratic polynomial will be the same as the conductor of the associated $\z$-coset.  One can readily check that this correspondence leads to a one-to-one correspondence between isometry classes of $\z$-cosets and equivalence classes of complete quadratic polynomials.

\begin{defn}
Let $L$ be a $\z$-lattice.  For any integer $m$, let
$$\Lambda_m(L) = \{\bx \in L : Q(\bx + \bz) \equiv Q(\bz) \mbox{ mod } m \mbox{ for every } \bz \in L\},$$
and for any prime $p$, let
$$\Lambda_m(L_p) = \{\bx \in L_p : Q(\bx + \bz) \equiv Q(\bz) \mbox{ mod } m \mbox{ for every } \bz \in L_p\}.$$
\end{defn}

\begin{lem} \label{watsonproperties} \cite[Lemma 2.2]{ce}
Let $L$ be a $\z$-lattice, $m$ an integer, and $p$ a prime. Then the following hold.
\begin{enumerate}
\item[(a)] $\Lambda_m(L)$ is a sublattice of $L$ and $\Lambda_m(L_p)$ is a sublattice of $L_p$.

\item[(b)] $\Lambda_m(L_p) = \Lambda_m(L)_p$.

\item[(c)]$\Lambda_m(L_p) = L_p$ whenever $p \nmid m$.

\item[(d)] $\norm(\Lambda_m(L)) \subseteq m\z$ and $\norm(\Lambda_m(L_p)) \subseteq m\z_p$.

\item[(e)] If $\norm(L) \subseteq \z$, then $pL \subseteq \Lambda_p(L)$ and $pL_p \subseteq \Lambda_m(L_p)$.

\item[(f)] If $N$ splits $L_p$ and $\norm(N) \subseteq p\z_p$, then $N \subseteq \Lambda_p(L_p)$.

\end{enumerate}
\end{lem}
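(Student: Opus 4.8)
I would first rewrite the defining congruence in a form that makes the dependence on $\bx$ transparent. Expanding $Q(\bx+\bz)=Q(\bx)+2B(\bx,\bz)+Q(\bz)$ and taking $\bz=\mathbf 0$ shows that every $\bx\in\Lambda_m(L)$ satisfies $Q(\bx)\in m\z$; once this holds, the defining condition is equivalent to $2B(\bx,\bz)\in m\z$ for all $\bz\in L$. Thus
\[
\Lambda_m(L)=\{\bx\in L : Q(\bx)\in m\z \ \text{ and }\ 2B(\bx,\bz)\in m\z \ \text{ for all } \bz\in L\},
\]
and likewise for $\Lambda_m(L_p)$ with $m\z_p$ replacing $m\z$. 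Statement (d) is now immediate. For (a), I would check from this description that the set is a $\z$-submodule of $L$: if $\bx,\by$ lie in it, then $Q(\bx+\by)=Q(\bx)+Q(\by)+2B(\bx,\by)\in m\z$ (the last term because $\by\in L$) and $2B(\bx+\by,\bz)=2B(\bx,\bz)+2B(\by,\bz)\in m\z$, while $Q(a\bx)=a^2Q(\bx)$ and $2B(a\bx,\bz)=a\cdot 2B(\bx,\bz)$ for $a\in\z$; a submodule of the free module $L$ is a sublattice. The argument over $\z_p$ is identical.

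Statements (e) and (f) then fall out of the rewritten description. For (e), assuming $\norm(L)\subseteq\z$ (so $Q$ and $2B$ are $\z$-valued on $L$), for any $\bx\in L$ we get $Q(p\bx)=p^2Q(\bx)\in p\z$ and $2B(p\bx,\bz)=p\cdot 2B(\bx,\bz)\in p\z$, hence $p\bx\in\Lambda_p(L)$, and $pL_p\subseteq\Lambda_p(L_p)$ follows from the same computation. For (f), if $L_p=N\perp N'$ with $\norm(N)\subseteq p\z_p$, then for $\bx\in N$ and $\bz=\bz_1+\bz_2$ with $\bz_1\in N$, $\bz_2\in N'$ one has $2B(\bx,\bz)=2B(\bx,\bz_1)=Q(\bx+\bz_1)-Q(\bx)-Q(\bz_1)\in p\z_p$ since all three terms lie in $p\z_p$; together with $Q(\bx)\in p\z_p$ this gives $\bx\in\Lambda_p(L_p)$.

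For (c), when $p\nmid m$ we have $m\z_p=\z_p$, so both conditions in the description of $\Lambda_m(L_p)$ are automatically satisfied by every $\bx\in L_p$ (using that $Q$ and $2B$ are $\z_p$-valued on $L_p$), whence $\Lambda_m(L_p)=L_p$. This leaves (b), which is the one point that needs a genuine argument rather than bookkeeping, and I expect it to be the only real obstacle. The inclusion $\Lambda_m(L)_p\subseteq\Lambda_m(L_p)$ is clear, since the right-hand side is a $\z_p$-module containing $\Lambda_m(L)$; the content is that this is an equality, i.e. that imposing the congruences prime by prime does not enlarge the object. I would deduce this from the standard bijection between $\z$-lattices on $V$ and admissible families $(M_q)_q$ of $\z_q$-lattices (those agreeing with a fixed reference lattice for all but finitely many $q$), under which a lattice goes to the family of its localizations and a family goes to $\bigcap_q M_q$ taken inside $V$.

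By (c) the family $\bigl(\Lambda_m(L_q)\bigr)_q$ is admissible, so it suffices to show $\bigcap_q\Lambda_m(L_q)=\Lambda_m(L)$ inside $V$, as this forces the localization of the left-hand lattice at $p$ to be $\Lambda_m(L_p)$. One direction is clear. For the other, if $\bx$ lies in $\Lambda_m(L_q)$ for every $q$, then $\bx\in\bigcap_qL_q=L$, and for each $\bz\in L$ one has $\ord_q(Q(\bx))\geq\ord_q(m)$ and $\ord_q(2B(\bx,\bz))\geq\ord_q(m)$ for every $q$ (for $q\nmid m$ this is just integrality, using $\bz\in L\subseteq L_q$ together with the fact that $L$ generates $L_q$ over $\z_q$, so that testing $2B(\bx,\cdot)$ against $L_q$ is the same as testing it against $L$); hence $Q(\bx)\in m\z$ and $2B(\bx,\bz)\in m\z$, i.e. $\bx\in\Lambda_m(L)$. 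Thus $\Lambda_m(L)_p=\Lambda_m(L_p)$. The step to watch in (b) is precisely this valuation-by-valuation reconstruction: one must be sure that the obviously weaker local conditions at the various primes, taken together, are equivalent to the single global congruence, which is what the Chinese Remainder Theorem (in the guise of the valuation inequalities) together with the fact that $L$ generates $L_q$ over $\z_q$ provide.
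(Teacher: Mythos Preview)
The paper does not supply its own proof of this lemma; it is simply quoted from \cite[Lemma~2.2]{ce} and used as a black box. Your argument is correct and is essentially the standard one: rewriting the defining congruence via $Q(\bx+\bz)-Q(\bz)=Q(\bx)+2B(\bx,\bz)$ reduces membership in $\Lambda_m$ to the two conditions $Q(\bx)\in m\z$ and $2B(\bx,L)\subseteq m\z$, from which (a), (d), (e), (f) drop out by direct computation, (c) is trivial once $m\in\z_p^\times$, and (b) is the usual local--global correspondence for lattices (an admissible family of local lattices comes from a unique global lattice, and checking the two conditions prime by prime recovers the global ones).

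One small remark: in your proof of (c) and in the ``$q\nmid m$'' step of (b) you use that $Q$ and $2B$ take $\z_p$-values on $L_p$, i.e.\ that $\norm(L_p)\subseteq\z_p$. You flag this parenthetically, which is good; strictly speaking the lemma as stated does not impose it (indeed part~(e) adds it as an explicit hypothesis), but it is a standing assumption in the source \cite{ce} and in every application in the present paper, so no harm is done.
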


\begin{lem}\label{watson2}
Let $L$ be a $\z$-lattice and $p$ be an odd prime.  Suppose that $L_p = M \perp N$ where $M$ is unimodular and $\norm(N) \subseteq p\z_p$.  Then
$\Lambda_p(L_p) = pM \perp N$.  If, in addition, $M$ is anisotropic, then $\Lambda_p(L_p) = \{\bx \in L_p : Q(\bx) \in p\z_p\}$.
\end{lem}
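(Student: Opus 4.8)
The plan is to work over $\z_p$ throughout and prove both statements by direct computation with the decomposition $L_p = M \perp N$. First I would establish the inclusion $pM \perp N \subseteq \Lambda_p(L_p)$. Since $p$ is odd and $M$ is unimodular, $\norm(M) = \scale(M) = \z_p$, but the key point is that $pM$ has norm in $p^2\z_p \subseteq p\z_p$, and $N$ has norm in $p\z_p$ by hypothesis. Applying Lemma \ref{watsonproperties}(f) with $N$ replaced by $pM \perp N$ (which splits $L_p$ and has norm contained in $p\z_p$) immediately gives $pM \perp N \subseteq \Lambda_p(L_p)$.

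For the reverse inclusion, I would take an arbitrary $\bx \in \Lambda_p(L_p)$ and write $\bx = \bx_M + \bx_N$ according to the orthogonal splitting, with $\bx_M \in M$ and $\bx_N \in N$. The defining condition $Q(\bx + \bz) \equiv Q(\bz) \bmod p$ for all $\bz \in L_p$, when expanded, reads $Q(\bx) + 2B(\bx, \bz) \equiv 0 \bmod p$ for all $\bz$; since this must hold for $\bz$ and $2\bz$ (and $p$ is odd so $2$ is a unit), it forces both $Q(\bx) \in p\z_p$ and $2B(\bx, \bz) \in p\z_p$ for all $\bz \in L_p$, i.e. $B(\bx, L_p) \subseteq p\z_p$. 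Restricting the latter to $\bz \in M$ and using that $M$ is unimodular (so the bilinear form on $M$ is nondegenerate modulo $p$, as $p$ is odd), I conclude $\bx_M \in pM$. Thus $\bx \in pM \perp N$, completing the equality $\Lambda_p(L_p) = pM \perp N$.

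For the second assertion, assume additionally that $M$ is anisotropic. One inclusion is trivial: every $\bx \in \Lambda_p(L_p)$ satisfies $Q(\bx) \in p\z_p$ by the computation above (take $\bz = \bx$, or just use Lemma \ref{watsonproperties}(d)). For the reverse, suppose $\bx = \bx_M + \bx_N \in L_p$ with $Q(\bx) \in p\z_p$. Then $Q(\bx_M) + Q(\bx_N) \in p\z_p$, and since $Q(\bx_N) \in p\z_p$ we get $Q(\bx_M) \in p\z_p$. Now $M$ unimodular means $Q$ maps $M$ onto $\z_p$ up to the behavior of the quadratic form modulo $p$; reducing modulo $p$, $M/pM$ carries a nondegenerate quadratic form over $\f_p$ which, by the anisotropy hypothesis together with the fact that anisotropic $\f_p$-spaces have dimension at most $2$, has the property that $Q(\bar\bx_M) = 0$ in $\f_p$ forces $\bar\bx_M = 0$. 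Hence $\bx_M \in pM$, so $\bx \in pM \perp N = \Lambda_p(L_p)$ by the first part.

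The main obstacle is the anisotropy step: I need to be careful that ``$M$ anisotropic over $\z_p$'' passes correctly to ``the reduction of $Q|_M$ is anisotropic over $\f_p$,'' and that an anisotropic quadratic form over $\f_p$ has no nontrivial zeros --- this is where $\rank M \leq 2$ enters, since every ternary (or higher) form over a finite field is isotropic. I would cite \cite[63:\ ]{om} (the relevant result on anisotropic spaces over $\z_p$ reducing to anisotropic spaces over $\f_p$ for odd $p$, via the Local Square Theorem) to bridge this gap cleanly, and otherwise the argument is a routine unwinding of the definition of $\Lambda_p$ together with nondegeneracy of unimodular lattices modulo an odd prime.
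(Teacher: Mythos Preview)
Your argument is largely sound, but there is one genuine slip and one genuine methodological difference from the paper worth noting.

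\medskip

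\textbf{The slip.} In establishing $pM \perp N \subseteq \Lambda_p(L_p)$ you invoke Lemma~\ref{watsonproperties}(f) with $N$ replaced by $pM\perp N$, claiming this sublattice ``splits $L_p$''. It does not: if $L_p = (pM\perp N)\perp K$ then a rank count forces $K=0$, hence $L_p = pM\perp N$, which is false whenever $M\neq 0$. The fix is immediate: apply (f) to $N$ itself (which \emph{does} split $L_p$) to get $N\subseteq\Lambda_p(L_p)$, and handle $pM$ separately---either by (e), or by the direct check that for $\bx\in pM$ and $\bz=\bz_M+\bz_N\in L_p$ one has $Q(\bx)\in p^2\z_p$ and $2B(\bx,\bz)=2B(\bx,\bz_M)\in p\,\scale(M)=p\z_p$. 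Your reverse inclusion $\Lambda_p(L_p)\subseteq pM\perp N$ is fine; note that the paper simply cites \cite[Lemma~2.3]{ce} for the whole first assertion, so you are supplying more detail here than the paper does.

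\medskip

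\textbf{The different route in the second assertion.} Your reduction-mod-$p$ argument is correct once one knows that a unimodular anisotropic $\z_p$-lattice (for $p$ odd) has anisotropic reduction over $\f_p$; this is indeed a consequence of Hensel/the Local Square Theorem, though you should pin down the reference more precisely than ``\cite[63:\ ]{om}''. The paper takes a different, slightly slicker path that avoids passing to $\f_p$ altogether: assuming $\bx_M\notin pM$, it observes that $\bx_M$ is then primitive in the unimodular lattice $M$, so by \cite[82:17]{om} there exists $\bz\in M$ with $B(\bx_M,\bz)=1$; the binary sublattice $\z_p\bx_M+\z_p\bz$ then has discriminant $Q(\bx_M)Q(\bz)-1\equiv -1\pmod p$, hence lies in $-\z_p^{\times 2}$ by the Local Square Theorem, so this sublattice is a hyperbolic plane inside $M$, contradicting anisotropy. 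Both arguments ultimately lean on the Local Square Theorem; the paper's version stays entirely within $\z_p$ and exhibits an explicit isotropic sublattice, while yours is more structural but requires the extra descent-of-anisotropy step.
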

\begin{proof}
The first assertion is \cite[Lemma 2.3]{ce}.  For the second assertion, it follows immediately from the definition of $\Lambda_p(L_p)$ that $\Lambda_p(L_p)$ is a subset of $\{\bx \in L_p : Q(\bx) \in p\z_p\}$.  Conversely, suppose that $\bx \in L_p$ and $Q(\bx) \in p\z_p$.  Write $\bx = \bx_0 + \bx_1$, where $\bx_0 \in M$ and $\bx_1 \in N$.  Assume on the contrary that $\bx_0 \not \in pM$.  Then $\bx_0$ is a maximal vector in $M$, and by \cite[83:17]{om} there exists $\bz \in M$ such that $B(\bx, \bz) = 1$.  The binary sublattice of $M$ spanned by $\bx$ and $\bz$ has discriminant in $-\z_p^{\times 2}$, and hence it is isotropic.  This contradicts the hypothesis, thus $\bx_0 \in pM$ and $\bx \in pM \perp N = \Lambda_p(L_p)$.
\hfill $\square$\end{proof}

Suppose that $L$ is a $\z$-lattice on a nondegenerate quadratic space $V$.  Let $p$ be an odd prime such that $p \nmid \norm(L)$.  By Lemma \ref{watson2},
$$p^2\norm(L) \subseteq \norm(\Lambda_p(L)) \subseteq p\norm(L).$$
We denote by $\lambda_p$ the mapping that sends $L$ to the following lattice on the scaled space $V^{\frac{1}{p}}$ or $V^{\frac{1}{p^2}}$:
\begin{equation}\label{norm}
\lambda_p(L) = \begin{cases}
\Lambda_p(L)^{\frac{1}{p}} & \mbox{ if } \norm(\Lambda_p(L)) = p\norm(L),\\
\Lambda_p(L)^{\frac{1}{p^2}} & \mbox{ if } \norm(\Lambda_p(L)) = p^2\norm(L).
\end{cases}
\end{equation}
Collectively, these $\lambda_p$ are what we will refer to as Watson transformations.  Note that $\norm(\lambda_p(L)) = \norm(L)$.

\begin{lem} \label{reducedisc} \cite[Lemma 2.5]{ce}
Suppose that $L$ is a ternary $\z$-lattice on a nondegenerate quadratic space.  If $p$ is an odd prime such that $p^2 \mid d(L)$, then $d(\lambda_p(L)) = \frac{1}{p^t}d(L)$ for some $t \in \{1,2,4\}$.
\end{lem}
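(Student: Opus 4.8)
The plan is to reduce everything to a local computation at $p$ and read it off from a Jordan splitting of $L_p$. Since $\lambda_p(L)$ is only defined when $p\nmid\norm(L)$, we have $\norm(L_p)=\z_p$, so $L_p$ possesses a nonzero unimodular Jordan component; write $L_p=M\perp N$ with $M$ unimodular and $\norm(N)\subseteq p\z_p$, and set $n_0=\rank M$. If $n_0=3$, then $d(L_p)$ is a unit, contradicting $p^2\mid d(L)$, so $n_0\in\{1,2\}$. By Lemma~\ref{watson2}, $\Lambda_p(L_p)=pM\perp N$, while $\Lambda_p(L)_q=L_q$ for every prime $q\neq p$ by Lemma~\ref{watsonproperties}(b),(c). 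Hence $[L:\Lambda_p(L)]=[L_p:\Lambda_p(L_p)]=[M:pM]=p^{n_0}$, and since passing to a sublattice multiplies the discriminant by the square of the index, $d(\Lambda_p(L))=p^{2n_0}\,d(L)$.

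Next I would determine which branch of \eqref{norm} applies. As $p$ is odd, $\norm(pM)=p^2\z_p$ while $\norm(N)=p^s\z_p$ for some $s\geq 1$ (the leading scale of $N$); therefore $\norm(\Lambda_p(L_p))=p^{\min(2,s)}\z_p=p^{\min(2,s)}\norm(L_p)$. Thus $\lambda_p$ rescales the quadratic form by $\tfrac{1}{p}$ when $s=1$ and by $\tfrac{1}{p^2}$ when $s\geq 2$, and because $L$ has rank $3$, such a rescaling multiplies the discriminant by $p^{-3}$ in the first case and by $p^{-6}$ in the second.

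It then remains to combine these facts in the surviving cases, using $p^2\mid d(L)$ to discard degeneracies. If $n_0=2$, then $N$ has rank $1$ and $\ord_p d(L)=s$, so $p^2\mid d(L)$ forces $s\geq 2$; hence $d(\lambda_p(L))=p^{-6}\cdot p^{4}d(L)=p^{-2}d(L)$ and $t=2$. If $n_0=1$ and $s=1$, then $d(\lambda_p(L))=p^{-3}\cdot p^{2}d(L)=p^{-1}d(L)$ and $t=1$. If $n_0=1$ and $s\geq 2$, then $d(\lambda_p(L))=p^{-6}\cdot p^{2}d(L)=p^{-4}d(L)$ and $t=4$. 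In every case $t\in\{1,2,4\}$.

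I do not expect a genuine obstacle here: once the Jordan splitting is fixed, the argument is bookkeeping. The only points needing care are the identity $d(\Lambda_p(L))=[L:\Lambda_p(L)]^2\,d(L)$, the fact that rescaling a ternary form by $p^{-j}$ changes the discriminant by the factor $p^{-3j}$, and the observation that the hypothesis $p^2\mid d(L)$ is precisely what is needed both to rule out $n_0=3$ and to force $s\geq 2$ when $n_0=2$ — which is also exactly what guarantees that the discriminant strictly decreases under $\lambda_p$.
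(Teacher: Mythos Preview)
Your proof is correct. Note, however, that the paper does not actually prove this lemma: it is quoted verbatim from \cite[Lemma~2.5]{ce} with no argument given. Your approach---reducing to the local Jordan splitting $L_p=M\perp N$, computing $d(\Lambda_p(L))=p^{2\rank M}d(L)$ from the index, determining the scaling exponent in \eqref{norm} from $\norm(N)$, and then running the short case analysis on $(\rank M,\ord_p\norm(N))$---is the standard route and is almost certainly what appears in \cite{ce} as well. The one remark worth adding for completeness is that the hypothesis $p\nmid\norm(L)$ (needed for $\lambda_p$ to be defined at all) is implicit in the lemma statement, exactly as you observed.
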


\begin{defn}
An integral $\z$-coset $L + \bv$ is said to {\em behave well}\footnote{When $L + \bv$ is a $\z$-lattice, our definition of ``behaves well" is slightly different from the one used in \cite{ce}.} at a prime $p$ if $L_p$ has a unimodular Jordan component of rank at least 2.
\end{defn}

By \cite[92:1b]{om}, $L_p$ represents all $p$-adic units if $L + \bv$ behaves well at $p$.

\begin{prop} \label{descend}
Let $L + \bv$ be a primitive regular ternary $\z$-coset with conductor $c$, and $p$ be an odd prime which does not divide $c$.  If $L + \bv$ does not behave well at $p$, then there exists $\bw$ in the quadratic space underlying $\lambda_p(L)$ such that $\lambda_p(L) + \bw$ is a primitive regular $\z$-coset with conductor $c$.
\end{prop}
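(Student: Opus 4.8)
The plan is to construct $\bw$ explicitly by tracking what the Watson transformation $\lambda_p$ does to the coset part $\bv$, and then to verify the three required properties---conductor $c$, primitivity, and regularity---one at a time. Since $p \nmid c$, we have $\bv \in \frac{1}{c}L_p = L_p$ locally at $p$, so $\bv \in L_q + \bv$ behaves like a lattice point at $p$ but the global conductor is governed only by the primes dividing $c$. The natural definition is to take $\bw$ to be a vector in the space underlying $\lambda_p(L)$ that agrees with $\bv$ away from $p$ and is chosen suitably at $p$; concretely, since $\Lambda_p(L) \subseteq L$ and globally $\bv$ already has $c\bv \in L$ with $p \nmid c$, the image $\bw := \bv$ (viewed in the scaled space $V^{1/p}$ or $V^{1/p^2}$) will satisfy $c\bw \in \lambda_p(L)$ provided $c\bv \in \Lambda_p(L)$ locally at $p$, which holds because $p \nmid c$ forces $\Lambda_p(L_p) = L_p$ away from... no: one must check $c\bv \in \Lambda_p(L)_q$ for all $q$. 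For $q \neq p$ this is automatic since $\Lambda_p(L)_q = L_q$ by Lemma~\ref{watsonproperties}(c), and at $q = p$ we need $c\bv \in \Lambda_p(L_p)$, which holds since $c \in \z_p^\times$ and $\bv \in L_p$ already lies in $L_p$; but $\Lambda_p(L_p)$ may be a proper sublattice, so actually I would argue that $\bv$ itself (not $c\bv$) needs adjusting at $p$ by an element of $L_p$ to land in $\Lambda_p(L_p) + \bv$. The key point is that isometry classes of cosets only see $L_p + \bv$ up to the choice of coset representative, so I would replace $\bv$ at $p$ by a vector in the same $\Lambda_p(L_p)$-coset structure.

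The heart of the argument will be the regularity claim. Following the strategy of \cite{co2} and \cite{ce}, I would show that $\lambda_p$ is regularity-preserving: if $L + \bv$ is regular, then so is $\lambda_p(L) + \bw$. The mechanism is that a rational number $a$ represented by the genus of $\lambda_p(L) + \bw$ corresponds, after rescaling by $p$ or $p^2$, to a number $pa$ (or $p^2 a$) that is represented by the genus of a coset closely related to $L + \bv$; regularity of $L + \bv$ then produces an actual representation, which descends back. The subtlety---and this is where the hypothesis that $L + \bv$ does \emph{not} behave well at $p$ enters---is that $\Lambda_p(L_p)$ is genuinely smaller than $L_p$ exactly when $L_p$ fails to have a large unimodular part, so the local structure at $p$ of $\lambda_p(L)$ is controlled, and in particular Lemma~\ref{watson2} (the anisotropic case) lets me identify $\Lambda_p(L_p)$ with $\{\bx \in L_p : Q(\bx) \in p\z_p\}$ when the unimodular component is an anisotropic unit or a scaled unit. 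I would split into cases according to the Jordan splitting type of $L_p$ (unimodular rank $0$ or $1$), use Lemma~\ref{watson2} and Lemma~\ref{reducedisc} to pin down $\lambda_p(L)_p$, and in each case exhibit the local-to-global transfer of representations.

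For conductor and primitivity: the conductor is a class invariant and equals the conductor of the associated complete quadratic polynomial, which by Lemma~\ref{4c} forces $4c \in \norm_0$; since $\norm(\lambda_p(L)) = \norm(L)$ (noted right after \eqref{norm}) and the coset part is essentially unchanged away from $p$, the conductor stays $c$---I must only rule out that scaling by $p$ at $p$ either shrinks or enlarges the conductor, which it cannot since $p \nmid c$ and the conductor ideal is supported on primes dividing $c$. Primitivity of $\lambda_p(L) + \bw$, i.e. $\norm(\lambda_p(L) + \bw) = \z$, follows because $\norm(\lambda_p(L)+\bw)_q = \norm(L + \bv)_q = \z_q$ for $q \neq p$, and at $p$ one checks directly from the scaled Jordan form that a unit value is still represented (using that $L + \bv$ was primitive and $p \nmid c$, so $\bv$ contributes a unit or the lattice does).

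The main obstacle I expect is the regularity-preservation step, specifically the bookkeeping at the prime $p$ itself: one must show that whenever $pa$ (resp. $p^2a$) is locally represented by $L + \bv$ at $p$, the preimage under the scaling is the correct coset $\Lambda_p(L_p) + \bv$ and not some sibling coset, and conversely that a global representation of $pa$ by $L + \bv$ actually lands in $\Lambda_p(L) + \bv$ rather than merely in $L + \bv$---this last point is precisely why the "does not behave well at $p$" hypothesis is needed, as it guarantees (via Lemma~\ref{watson2}, anisotropic case) that any vector of the correct norm automatically lies in $\Lambda_p(L_p)$. Making this descent rigorous for cosets rather than lattices, where one cannot simply scale the quadratic form but must carry the affine shift $\bv$ along, is the delicate part.
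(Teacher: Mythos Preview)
Your instincts are largely correct---the anisotropic case of Lemma~\ref{watson2} is indeed the engine, and you have correctly identified that the ``does not behave well'' hypothesis is what forces any vector in $L_p$ with norm in $p\z_p$ to land in $\Lambda_p(L_p)$---but your construction of $\bw$ has a genuine gap.

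You propose $\bw = \bv$ and then, noticing that $\bv$ need not lie in $\Lambda_p(L_p)$, suggest ``adjusting $\bv$ at $p$ by an element of $L_p$.'' That is not a global operation: changing the coset representative means adding an element of $\Lambda_p(L)$, not an element of $L_p$ at a single prime. Concretely, the regularity step you sketch requires that whenever $\bx \in L$ satisfies $Q(\bx + \bv) \in p\z$, then $\bx + \bv \in \Lambda_p(L)_p + \bw$. Lemma~\ref{watson2} gives $\bx + \bv \in \Lambda_p(L_p)$, so you need $\bw \in \Lambda_p(L_p)$ as well; with $\bw = \bv$ this fails whenever $Q(\bv) \in \z_p^\times$, and there is no reason to rule that out. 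Your case split by ``unimodular rank $0$ or $1$'' is also unnecessary: since $p \nmid c$ forces $\bv \in L_p$ and primitivity gives $\norm(L_p) = \norm(L_p + \bv) = \z_p$, the unimodular rank is automatically at least $1$, hence exactly $1$.

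The paper resolves this with a single clean trick: set $\bw = p^j\bv$, where $j$ is the multiplicative order of $p$ modulo $c$. Then $p^j \equiv 1 \pmod c$ ensures $(p^j - 1)\bv \in L$, so at every prime $q \mid c$ one has $\Lambda_p(L)_q + p^j\bv = L_q + \bv$, while at $p$ one has $Q(p^j\bv) = p^{2j}Q(\bv) \in p\z_p$ and hence $p^j\bv \in \Lambda_p(L_p)$ by Lemma~\ref{watson2}. This yields the identity
\[
\Lambda_p(L)_q + p^j\bv =
\begin{cases}
L_q + \bv & q \mid c,\\
\Lambda_p(L)_p & q = p,\\
L_q & q \nmid pc,
\end{cases}
\]
from which regularity of $\Lambda_p(L) + p^j\bv$ follows directly (no rescaling-and-descending manoeuvre is needed before the regularity check; the paper proves regularity first and only then scales). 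Conductor $c$ and primitivity drop out of the same local identity. So your overall architecture is right, but the missing ingredient is this explicit choice of $\bw$.
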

\begin{proof}
Let $j$ be the order of $p$ modulo $c$.  We claim that
\begin{equation} \label{lambdashape}
\Lambda_p(L)_q + p^j\bv = \begin{cases}
L_q + \bv & \mbox{ if } q\mid c,\\
\Lambda_p(L)_q & \mbox{ if } q = p,\\
L_q & \mbox{ if } q \nmid pc.
\end{cases}
\end{equation}
The first and the third cases in \eqref{lambdashape} are straightforward.  As for the case $p = q$, since $L + \bv$ does not behave well at $p$, it follows from Lemma \ref{watson2} that $\Lambda_p(L)_p = \{\bx \in L_p : Q(\bx) \in p\z_p\}$.  Therefore, $p^j\bv \in \Lambda_p(L)_p$, and hence $\Lambda_p(L)_p + p^j\bv = \Lambda_p(L)_p$.

Suppose that $a$ is represented by the genus of $\Lambda_p(L) + p^j\bv$.  By \eqref{lambdashape}, $a$ is also represented by the genus of $L + \bv$.  Since $L + \bv$ is regular, $a$ is in fact represented by $L + \bv$, which means that there exists $\bx \in L$ such that $Q(\bx + \bv) = a$.   By \eqref{lambdashape} again, $\bx + \bv$ is contained in $\Lambda_p(L)_q + p^j\bv$ for every $q \neq p$.  At $p$, since $a$ is represented by $\Lambda_p(L)_p + p^j\bv = \Lambda_p(L)_p$, $p$ must divide $a$ by Lemma \ref{watson2}.  Thus, $p \mid Q(\bx + \bv)$ and $\bx + \bv$ must be in $\Lambda_p(L)_p$, by Lemma \ref{watson2} one more time.   Altogether we have shown that $\bx + \bv$ is in $\Lambda_p(L)_q + p^j\bv$ for every prime $q$.  So, $\bx + \bv$ is in $\Lambda_p(L) + p^j\bv$, which proves that $\Lambda_p(L) + p^j\bv$  is regular.

It is clear that $\Lambda_p(L) + p^j\bv$ has conductor $c$.  Since the conductor and the regularity of a $\z$-coset are preserved upon scaling of the underlying quadratic form, $\lambda_p(L) + p^j\bv$ is also regular and has conductor $c$.  It remains to show that $\lambda_p(L) + p^j\bv$ is primitive.  The quadratic form on $\lambda_p(L)$ is $\frac{1}{p^i}Q$, where $i = 1$ or 2, see \eqref{norm}.  By \eqref{lambdashape},
\begin{equation*}
\norm(\Lambda_p(L)_q + p^j\bv) = \begin{cases}
\norm(L_q + \bv) = \z_q & \mbox{ if $q \mid c$},\\
\norm(\Lambda_p(L)_p) = p^i\z_p & \mbox{ if $p = q$},\\
\norm(L_q) = \z_q & \mbox{ if $q \nmid pc$}.
\end{cases}
\end{equation*}
Therefore, $\norm(\lambda_p(L) + p^j\bv) = \z$, which is what we need to show.
\hfill $\square$\end{proof}

\section{Bounding the Discriminant I}

Given a positive $\z$-coset $L + \bv$, we can always choose $\bv$ such that
\begin{equation}\label{bv}
Q(\bv) = \min \{Q(\bx + \bv) : \bx \in L\}.
\end{equation}
If, in addition,  $\{\be_1, \ldots, \be_n\}$ is a Minkowski reduced basis of $L$, then the polynomial $Q(x_1\be_1 + \cdots + x_n\be_n + \bv)$ will be a Minkowski reduced quadratic polynomial.  From now on, unless stated otherwise, we always assume that \eqref{bv} holds when we present a positive $\z$-coset in the form $L + \bv$.

\begin{lem} \label{finite}
There are only finitely many isometry classes of integral $\z$-cosets of a fixed rank and discriminant.
\end{lem}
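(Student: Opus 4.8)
The plan is to split the assertion into a lattice part, for which one invokes the classical finiteness of integral $\z$-lattices of bounded discriminant, and a coset part, where integrality will be shown to confine the translation vector to finitely many choices. Let $L+\bv$ be an integral $\z$-coset of rank $n$ on a quadratic space $(V,Q)$ with associated bilinear form $B$, and of discriminant $d$ (which we may take to be nonzero, $V$ being nondegenerate); fix a basis $\{\be_1,\ldots,\be_n\}$ of $L$.

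First I would record what integrality of $L+\bv$ provides. Taking $\bx=0$ in $Q(\bx+\bv)\in\z$ gives $Q(\bv)\in\z$. For $\bx,\by\in L$ the identities
$$2B(\bx,\by)=Q(\bx+\by+\bv)-Q(\bx+\bv)-Q(\by+\bv)+Q(\bv), \qquad 2Q(\bx)=Q(\bx+\bv)+Q(-\bx+\bv)-2Q(\bv)$$
show that $2B(\bx,\by)\in\z$ and $2Q(\bx)\in\z$. Hence $2B(L,L)\subseteq\z$, so the lattice obtained from $L$ by scaling its quadratic form by $2$ is an integral $\z$-lattice of discriminant $2^n d$. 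Moreover, from $Q(\be_i+\bv)\in\z$ and $Q(\bv)\in\z$ we get $Q(\be_i)+2B(\be_i,\bv)\in\z$, and since $2Q(\be_i)\in\z$ this forces $4B(\be_i,\bv)\in\z$ for every $i$.

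Next I would bound the conductor $c$ of $L+\bv$ by elementary linear algebra. Write $\bv=\sum_i t_i\be_i$ with $t_i\in\q$, and set $G=(2B(\be_i,\be_j))_{i,j}$, an integral symmetric matrix with $\det G=2^n d(L)=2^n d$. Since $(G\,{}^t(t_1,\ldots,t_n))_j=2B(\bv,\be_j)$, the conditions $4B(\be_j,\bv)\in\z$ say exactly that $2G\,{}^t(t_1,\ldots,t_n)\in\z^n$, whence Cramer's rule gives $t_i\in\frac{1}{2^{n+1}|d|}\z$ for all $i$. Therefore $c$ divides $2^{n+1}|d|$, and in particular $2^{n+1}|d|\,\bv\in L$.

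Finally I would combine the two halves. By the classical finiteness theorem --- there are only finitely many isometry classes of integral $\z$-lattices of a given rank and discriminant (elementary reduction theory in the definite case, which is the one needed here; in general a consequence of the finiteness of the number of genera and of the class number of each genus) --- the rescaled lattice of discriminant $2^n d$, hence $L$ itself, lies in one of finitely many isometry classes; fix representatives $L^{(1)},\ldots,L^{(s)}$. Any integral coset of rank $n$ and discriminant $d$ is isometric to one of the form $L^{(k)}+\bv'$ with $2^{n+1}|d|\,\bv'\in L^{(k)}$, i.e.\ $\bv'\in\frac{1}{2^{n+1}|d|}L^{(k)}$, and such a coset depends only on the class of $\bv'$ in the finite group $(\frac{1}{2^{n+1}|d|}L^{(k)})/L^{(k)}$. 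This yields at most $s\cdot(2^{n+1}|d|)^n$ isometry classes in total. The one point needing care, and the only thing beyond the classical lattice statement, is the conductor bound: although $\bv$ ranges a priori over all of $V$, integrality already pins its class modulo $L$ into a bounded finite set.
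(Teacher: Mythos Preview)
Your proof is correct and follows the same two-step strategy as the paper: first invoke the classical finiteness of integral lattices of given rank and discriminant, then show that integrality confines the translation vector to finitely many cosets of $L$. The paper phrases the second step via the dual lattice (from $2B(\bv,\bx)\in\z$ one gets $2\bv\in L^\#$, hence $\bv$ lies in the finite set $\tfrac{1}{2}L^\#/L$), whereas you reach the same conclusion by Cramer's rule; your version is also a bit more careful in not assuming $L$ itself is integral, handling the possible half-integrality by scaling by $2$ first.
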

\begin{proof}
Let $n$ and $k$ be fixed integers, and let $L$ be an integral lattice of rank $n$ and discriminant $k$.  If $\bv \in \q L$ and $L + \bv$ is integral, then  $2B(\bv, \bx) \in \z$ for all $\bx \in L$ and hence $2\bv$ is in $L^\#$, the dual of $L$. Since $L^\#/L$ is a finite group of size $k$,  there are only finitely many possible integral $\z$-cosets of the form $L + \bv$.  The lemma is now clear since it is well-known that there are only finitely many isometry classes of integral lattices of rank $n$ and discriminant $k$.
\hfill $\square$\end{proof}

Let $L + \bv$ be a positive $\z$-coset of rank $n$.  The successive minima $\mu_1 \leq \cdots \leq \mu_n$ of $L$ satisfy the inequality \cite[Prop 2.3]{e}
\begin{equation}\label{disc}
d(L) \leq \mu_1\cdots\mu_n.
\end{equation}

Let $c$ be the conductor of $L + \bv$.  For every prime $p$ dividing $2c$, Lemma \ref{localrep} shows that $L_p + \bv$ represents a coset $r_p + p^{k_p}\z_p$, where $k_p$ is a nonnegative integer bounded above by a constant depending only on $c$.  Set
\begin{equation}\label{ar}
a = a(L + \bv): = \displaystyle{\prod_{p\mid 2c}p^{k_p}}, \quad r = r(L + \bv): = \min \{b \in \mathbb N: b \equiv r_p \mbox{ mod } a \text{ for all } p|2c\}.
\end{equation}
Note that both $a$ and $r$ are bounded above by a constant depending only on $c$.

\begin{prop}\label{behavewell}
Let $L + \bv$ be a primitive regular positive ternary $\z$-coset of conductor $c$.  If $L + \bv$ behaves well at all primes $p$ not dividing $2c$, then $d(L)$ is bounded above by a constant depending only on $c$.
\end{prop}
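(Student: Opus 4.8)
The plan is to argue by contradiction. Suppose $d := d(L)$ is larger than every bound depending only on $c$; I will estimate in two different ways the number of integers in a suitable interval that are represented by $L + \bv$, and derive a contradiction. Write $\mu_1 \le \mu_2 \le \mu_3$ for the successive minima of $L$, so that $d \le \mu_1\mu_2\mu_3 \le \mu_3^3$ by \eqref{disc} and hence $\mu_3 \ge d^{1/3}$; set $E := \log\log d$ and fix a Minkowski reduced basis $\be_1, \be_2, \be_3$ of $L$.

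First I would isolate a local statement: every positive integer $n$ with $n \equiv r \pmod{a}$ (with $a$ and $r$ as in \eqref{ar}, both $\ll_c 1$) and $\gcd(n, D) = 1$, where $D$ denotes the largest divisor of $d$ coprime to $2c$, is represented by the genus of $L + \bv$, hence, by regularity, by $L + \bv$ itself. Indeed, for $p \mid 2c$ the congruence $n \equiv r_p \pmod{p^{k_p}}$ places $n$ in a coset represented by $L_p + \bv$ (Lemma \ref{localrep}); for $p \mid d$ with $p \nmid 2c$ the hypothesis that $L + \bv$ behaves well at $p$ forces $L_p$ — hence $L_p + \bv = L_p$, as $p \nmid c$ — to represent every $p$-adic unit, and $\gcd(n,D)=1$ makes $n$ a unit at $p$; at all other primes $L_p$ is unimodular, hence universal over $\z_p$; and positivity handles the real place. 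The essential point, and the only place the behaving-well hypothesis enters, is that imposing $\gcd(n,D)=1$ removes every quadratic-character condition coming from the primes dividing $d$, leaving only the fixed congruence modulo $a \ll_c 1$. Counting such $n$ in $(0,H)$ is then an elementary sieve — equivalently, Proposition \ref{eresult} with no characters and $X = D$ — with count $\tfrac{\phi(D)}{aD}H + O_\epsilon(d^{\epsilon})$, which is $\gg_c H/E$ as soon as $H \ge d^{\epsilon}$, since $\phi(D)/D \gg 1/\log\log D \ge 1/E$. Thus, for every $H$ with $d^{1/3} \le H < \tfrac32\mu_3$, the coset $L + \bv$ represents more than $\kappa_c\,H/E$ distinct integers in $(0,H)$.

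Against this I would put a geometric count, taking $H = \mu_3$. By Proposition \ref{inequality}(a), every $\bx = a_1\be_1 + a_2\be_2 + a_3\be_3 \in L$ with $Q(\bx + \bv) < \mu_3 < \tfrac32\mu_3$ satisfies $|a_3| \le 8$; hence the integers in $(0,\mu_3)$ represented by $L + \bv$ all appear among the values of the seventeen binary $\z$-cosets $K + (a_3\be_3 + \bv)$, $|a_3|\le 8$, where $K = \z\be_1 \oplus \z\be_2$ has discriminant $\asymp \mu_1\mu_2$. A first, crude estimate — counting integer points of a translated ellipse of area $\asymp \mu_3/\sqrt{\mu_1\mu_2}$ — shows that $L + \bv$ represents $\ll \mu_3/\sqrt{\mu_1\mu_2} + \sqrt{\mu_3/\mu_1} \ll \mu_3/\sqrt{\mu_1\mu_2}$ distinct integers in $(0,\mu_3)$ (the second term is absorbed because $\mu_2 \le \mu_3$). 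Comparison with the lower bound forces $\mu_1\mu_2 \ll_c E^2$, so the binary polynomials involved have discriminant and conductor both $\ll_c (\log\log d)^2$. I would then invoke the classical Landau-type estimate, as used in \cite{co2}, that such a binary quadratic polynomial represents $\ll_c (\log\log d)^{O(1)}\,Y/\sqrt{\log Y}$ integers up to $Y$, sharpening the geometric count in $(0,\mu_3)$ to $\ll_c (\log\log d)^{O(1)}\,\mu_3/\sqrt{\log\mu_3}$. Comparing once more with the lower bound $\gg_c \mu_3/E$ gives $\sqrt{\log\mu_3} \ll_c (\log\log d)^{O(1)}$; since $\log\mu_3 \ge \tfrac13\log d$, this says $\sqrt{\log d} \ll_c (\log\log d)^{O(1)}$, which fails once $d$ is large in terms of $c$. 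The contradiction proves the proposition.

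The main obstacle is that a single sieve-versus-geometry comparison does not suffice: the crude integer-point count is too lossy to bound $\mu_3$, while the Landau-type estimate for binary forms — the only tool with a $1/\sqrt{\log}$ saving capable of beating a lower bound linear in $\mu_3$ — carries an implied constant growing with the discriminant of the binary part (essentially like $\sqrt{\log\delta}$), and is useless until that discriminant has been pinned down as sub-polynomial in $d$. The two-stage bootstrap above is built around this tension. A secondary, more routine point is the careful verification that, under the behaving-well hypothesis, restricting to integers coprime to $D$ wipes out every quadratic-character obstruction, so that the density of genus-represented integers stays comparable to $1/\log\log d$ rather than collapsing.
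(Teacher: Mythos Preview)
Your route is genuinely different from the paper's, and the first half of it is sound: the sieve lower bound $\gg_c H/E$ for represented integers in $(0,H)$, the reduction via Proposition~\ref{inequality}(a) to seventeen binary cosets, and the ellipse-point count yielding $\mu_1\mu_2 \ll_c E^2$ are all correct. The problem is the second stage.

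You invoke ``the classical Landau-type estimate, as used in \cite{co2}, that such a binary quadratic polynomial represents $\ll_c (\log\log d)^{O(1)}\,Y/\sqrt{\log Y}$ integers up to $Y$''. This is the gap. First, \cite{co2} does not use any such estimate: it proceeds exactly as the present paper does, by the anisotropic-prime construction described below, and your citation is misplaced. Second, the statement you need is not the classical Landau--Bernays asymptotic (which is for a \emph{fixed} form as $Y\to\infty$) but an upper bound \emph{uniform} in the discriminant, with implied constant polynomial in $\log\delta$; this is true, but it requires an upper-bound sieve together with the estimate $L(1,\chi_{-\delta})\ll\log\delta$ (or an appeal to Blomer's uniform results), none of which is in the paper's toolkit or has been supplied by you. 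Third, you are applying it to binary \emph{cosets}, not forms, and while the reduction to a form (scale by the conductor $c'_j\mid 2d(M)$ and embed $c'_j(M+\bw_j)\subseteq M$) does go through, it is another step you have left implicit. Without this estimate your bootstrap does not close, since the crude lattice-point count gives only $\ll \mu_3/\sqrt{\mu_1\mu_2}$, which after the first comparison is $\ll \mu_3/1$ --- no saving at all.

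For comparison, the paper avoids any density estimate for binary forms entirely. After bounding $\mu_2$ by a counting argument using Lemma~\ref{kkolemma}, it bounds $\mu_3$ \emph{constructively}: via Proposition~\ref{eresult} it locates seventeen primes $\ell_1,\ldots,\ell_{17}$, each coprime to $2aT$ and to the others, at which the binary sublattice $M$ is anisotropic, then uses the Chinese Remainder Theorem to build an integer $n\equiv r\pmod a$, coprime to $T$, with $\ord_{\ell_j}\!\big(n+q(\bw_j)-m_j\big)=1$ for every $j$. Such an $n$ is represented by $L+\bv$ (by regularity) but is locally obstructed at $\ell_j$ from each binary coset $h_j$, forcing $|a_3|\ge 9$ and hence $\mu_3\le\tfrac{2}{3}n$. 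The whole argument stays within elementary sieving and Burgess' character-sum bound; it never needs to count integers represented by a binary form.
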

\begin{proof}
Let $\mathfrak T$ be the set of odd primes $p$ such that $p \nmid c$ and $L_p$ does not represent all $p$-adic integers.  Then $\mathfrak T$ is a finite set.  Let $t$ be the size of $\mathfrak T$, $T$ be the product of all primes in $\mathfrak T$, and $\tilde{p}$ be the smallest prime in $\mathfrak T$.  Since $\tilde{p} > 2$,
$$\omega: = \frac{\tilde{p} + t - 1}{\tilde{p} - 1} \leq t + 1.$$
Let $a$ and $r$ be the integers defined as in \eqref{ar}, and $\mathfrak G$ be the set of all positive integers in the congruence class of $r$ mod $a$ that are relatively prime to $T$.

If $p \mid 2c$, then by Lemma \ref{localrep} $L_p + \bv$ represents all integers in $\mathfrak G$.  If $p \nmid 2c$ and $p \not \in \mathfrak T$, then certainly $L_p + \bv$ represents all integers in $\mathfrak G$.  Suppose that $p \in \mathfrak T$.  Then $L_p + \bv = L_p$ behaves well, which means that $L_p + \bv$ represents every $p$-adic unit.  Since $L + \bv$ is regular, we see that $L + \bv$ represents all positive integers in $\mathfrak G$.  We shall use these integers to obtain an upper bound for the product of the successive minima of $L$, and compare this upper bound with $d(L)$ by \eqref{disc}.  Let $\{\be_1, \be_2, \be_3\}$ be a Minkowski basis of $L$.  Then every $\ba \in \z^3$ is a linear combination $a_1\be_1 + a_2\be_2 + a_3\be_3$.

Let $\eta$ be the smallest positive integer such that
\begin{equation}\label{eta}
\eta > \left( 43\cdot 17\cdot (2\, (15 + \sqrt{225 + a\eta + r}) + 1) + 2^t - 1\right) \omega.
\end{equation}
By means of contradiction, let us suppose that $\frac{2}{3}\mu_2 > r + a\eta$.  If $Q(\ba + \bv) \leq r + a\eta$, then Proposition \ref{inequality} (a), (b) and (c) imply that $\vert a_2 \vert \leq 21$, $\vert a_3\vert \leq 8$, and $Q(\ba + \bv) \geq \mu_1(a_1^2 - 30\vert a_1 \vert)$.  Therefore, if $\vert a_1\vert > 15 + \sqrt{225 + r + a\eta}$, then $Q(\ba + \bv) > r + a\eta$.  This shows that the number of integers smaller than $r + a\eta$ which are represented by $L + \bv$ is at most
$$43\cdot 17 \cdot (2\, (15 + \sqrt{225 + r + a\eta}) + 1).$$
However, by Proposition \ref{kkolemma}, the number of integers in $\{r, r + a, \ldots, r + a(\eta - 1)\}$ which are represented by $L + \bv$ is at least
$$\eta\frac{\tilde{p} - 1}{\tilde{p} + t - 1} - 2^t + 1$$
which can be shown to be larger that  $43 \cdot 17 \cdot (2\, (15 + \sqrt{225 + r + a\eta}) + 1)$ using \eqref{eta}.  This is a contradiction, and hence we must have
$$\mu_2 \leq \frac{2}{3}(r + a\eta).$$
A straightforward calculation shows that $\eta \ll_a t 2^t$; thus
\begin{equation}\label{mu2}
\mu_2 \ll_a t 2^t.
\end{equation}

Let $M$ be the sublattice spanned by $\be_1$ and $\be_2$.  Then $d(M) \leq \mu_1\mu_2$.  For each integer $1 \leq j \leq 17$, $Q(y_1\be_1 + y_2\be_2 + (j - 9)\be_3 + \bv)$ is a positive binary integral quadratic polynomial in variables $\by = (y_1, y_2)$ which takes the form
$$h_j(\by) = q(\by) + 2b(\bw_j, \by) + m_j$$
where $q$ is the quadratic form on $M$ and $b$ is the bilinear form associated to $q$.  We will find a positive integer in $\mathfrak G$ which is not represented by any one of these 17 binary quadratic polynomials.  By Proposition \ref{inequality} (a), this integer will lead to an upper bound on $\mu_3$.

Let $D$ be the product of those primes in $\mathfrak T$ that do not divide $d(M)$.  For the sake of convenience, we set $\ell_0$ to be 1. By Proposition \ref{eresult}, for $1\leq i \leq 17$, there exists an integer $n_i$ such that $\left(\frac{-d(M)}{n_i}\right) = -1$ and $n_i$ is relatively prime to $2aT\ell_1...\ell_{i-1}$ (when $i = 1$,  this condition becomes $n_1 \nmid 2aT$).    The multiplicative property of the Jacobi symbol then guarantees the existence of a prime divisor $\ell_i$ of $n_i$ such that $\left(\frac{-d(M)}{\ell_i}\right) = -1$, $\ell_i\nmid 2aT\ell_1\ldots \ell_{i-1}$, and
\begin{eqnarray*}
\ell_i & \ll_\epsilon & d(M)^{\frac{3}{8} + \frac{\epsilon}{17}}(D 2a \ell_1\cdots \ell_{i-1})^{\frac{\epsilon}{17}}\\
    & \ll_{\epsilon, a} & d(M)^{\frac{3}{8} + \frac{\epsilon}{17}}D^{\frac{\epsilon}{17}} (\ell_1\cdots \ell_{i-1})^{\frac{\epsilon}{17}}.
\end{eqnarray*}
Let $\kappa$ be the product $\ell_1\cdots \ell_{17}$.  Then,
\begin{equation*}
\kappa \ll_{\epsilon, a}  d(M)^{\frac{51}{8} + \epsilon} D^\epsilon \prod_{i=1}^{17} (\ell_1\cdots \ell_{i-1})^{\frac{\epsilon}{17}} \leq  d(M)^{\frac{51}{8} + \epsilon} D^\epsilon \kappa^\epsilon
\end{equation*}
and hence
$$\kappa \ll_{\epsilon, a} d(M)^{\frac{51}{8(1 - \epsilon)} + \frac{\epsilon}{1 - \epsilon}} D^{\frac{\epsilon}{1 - \epsilon}}.$$
We now choose $\epsilon$ to be $\frac{1}{4}$.  This leads to
$$\kappa \ll_a d(M)^{\frac{51}{6} + \frac{1}{3}}D^\frac{1}{3} \ll_a (t2^t)^\frac{53}{6}T^\frac{1}{3}.$$

For each $1 \leq j \leq 17$, $M_{\ell_j}$ is a binary unimodular $\z_{\ell_j}$-lattice.  Since $b(\bw_j, \by) \in \z_{\ell_j}$ for all $\by \in M_{\ell_j}$,  $\bw_j$ is in $M_{\ell_j}$ and hence $q(\bw_j) \in \z_{\ell_j}$.  By the Chinese Remainder Theorem, there exists $m \leq \kappa^2$ such that
$$am \equiv \ell_j + m_j - q(\bw_j) - r \mbox{ mod } \ell_j^2 \quad \mbox{ for } 1\leq j \leq 17.$$
Then, for every integer $\lambda$ and every $1 \leq j \leq 17$, $\ord_{\ell_j}((a(m + \lambda \kappa^2)+ r) + q(\bw_j) - m_j) = 1$, and hence $(a(m + \lambda \kappa^2) + r) + q(\bw_j) - m_j$ is not represented by $q(\by + \bw_j)$ over $\z_{\ell_j}$.  In other words, $a(m + \lambda \kappa^2) + r$ is not represented by $h_j(\by)$.  On the other hand, by Lemma \ref{kkolemma} there must be a positive integer $f \leq (t + 1)2^t$ such that $a\kappa^2 f + am + r$ is relatively prime to $T$.  Thus, the integer $a(m + \kappa^2 f) + r$ is represented by $Q(\bx + \bv)$ but not by $h_j(\by)$ for any $j$.  It follows from Proposition \ref{inequality}(a) that there exists $\ba \in L$ such that
$$a(m + \kappa^2 f) + r = Q(\ba + \bv) \geq \frac{3}{2}\mu_3,$$
whence
\begin{equation} \label{mu3}
\mu_3 \leq \frac{2}{3}(a(m + \kappa^2 f) + r) \ll_a (t2^t)^{\frac{53}{3}} T^{\frac{2}{3}}.
\end{equation}

Finally, using the inequality \eqref{disc} and combining \eqref{mu2} and \eqref{mu3}, we have
$$T \leq d(L) \leq \mu_1\mu_2\mu_3 \leq \mu_2^2\mu_3 \ll_a (t2^t)^{\frac{59}{3}} T^{\frac{2}{3}}.$$
Since $T$ grows at least as fast as $t!$, the above inequalities show that $t$, and hence $T$ as well, must be bounded above by a constant depending only on $a$. This means that $d(L)$ is also bounded above by a constant depending only on $a$.  This proves the proposition,  since $a$ is bounded above by a constant depending only on $c$.
\hfill $\square$\end{proof}

\section{Bounding the Discriminant II}

Let $L + \bv$ be a primitive regular positive ternary $\z$-coset of conductor $c$.  We can apply Proposition \ref{descend} repeatedly at suitable odd primes and eventually obtain a primitive regular positive ternary lattice $K + \bw$ which behaves well at all odd primes not dividing $c$.  Moreover, the conductor of $K + \bw$ is still $c$ and $d(K)$ is a divisor of $d(L)$.  Let $\mathfrak L$ be the set of prime divisors of $d(L)$ which do not divide $d(K)$.  By Proposition \ref{behavewell}, every prime divisor of $d(L)$ that does not belong to $\mathfrak L$ is bounded by a constant depending only on $c$.

\begin{prop}\label{bound2}
All the primes in $\mathfrak L$ are bounded above by a constant depending only on $c$.
\end{prop}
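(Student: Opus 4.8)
The plan is to mimic the strategy of Proposition \ref{behavewell}, but now working with the lattice $K + \bw$ obtained by descending through Watson transformations, and to exploit the fact that the primes in $\mathfrak L$ were \emph{removed} from the discriminant precisely because $K + \bw$ fails to behave well at them. Recall that for each $p \in \mathfrak L$, the local lattice $K_p$ has no unimodular Jordan component of rank $\geq 2$; since $p$ is odd, Lemma \ref{watson2} applies and tells us that on a rank-one unimodular Jordan block the relevant quadratic condition is controlled by the Legendre symbol $\left(\frac{-d}{p}\right)$ for the appropriate local discriminant $d$. In particular, for such a $p$ there is a congruence obstruction: an integer $n$ lying in a suitable residue class modulo $p$ (or $p^2$) with a prescribed Legendre symbol condition is \emph{not} represented by $K_p + \bw$, even though it may be represented everywhere else.

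The key steps, in order, would be: (1) Fix $K + \bw$ as in the opening paragraph of Section 6, with conductor $c$ and behaving well at all odd primes outside $c$; let $a, r$ be the conductor-bounded quantities from \eqref{ar} attached to $K + \bw$. (2) For the primes $p \in \mathfrak L$, record the precise local non-representation condition coming from Lemma \ref{watson2}: it is a condition of the form ``$\chi_p(n) \neq u_p$'' for a quadratic character $\chi_p$ of modulus $p$ (together possibly with a congruence mod $p$ or $p^2$), where $\chi_p$ is the quadratic residue character and $u_p$ records the value forcing representability. (3) Assume, for contradiction, that some $p_0 \in \mathfrak L$ is large. Consider the system of characters $\{\chi_p : p \in \mathfrak L\}$ together with the congruence class $r \bmod a$; these characters are independent since they have distinct prime moduli. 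Apply Proposition \ref{charactersum} (the $\mathcal S'_s(H)$ estimate) to produce, in a controlled interval, many integers $n$ that are $\equiv r \bmod a$, that satisfy $\chi_p(n) = u_p$ for \emph{every} $p \in \mathfrak L$ (so $K + \bw$ represents them locally everywhere — at $p \in \mathfrak L$ by the chosen character value, at $p \mid 2c$ by Lemma \ref{localrep}, at the remaining odd $p$ because $K+\bw$ behaves well), and that are bounded by roughly $\left(\prod_{p \in \mathfrak L} p\right)^{3/8 + \epsilon} a^{19/8 + \epsilon}$, i.e. by $d(L)^{3/8 + \epsilon}$ up to conductor-bounded factors. (4) Since $K + \bw$ is regular, all these integers are genuinely represented by $K + \bw$. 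Now invoke Proposition \ref{inequality} exactly as in the proof of Proposition \ref{behavewell}: the number of integers below a bound $X$ represented by a reduced ternary coset is $O(X^{1/2})$ unless $\mu_3 \ll X$, and a counting/ranging argument forces an upper bound $\mu_3 \ll d(L)^{3/8 + \epsilon} \cdot (\text{conductor stuff})$, while $\mu_2$ is bounded as in \eqref{mu2}. (5) Combine with $d(K) \mid d(L)$ and $\prod_{p\in\mathfrak L} p \mid d(L)/d(K)$, and with the inequality $d(K) \leq \mu_1 \mu_2 \mu_3$ from \eqref{disc} applied to $K$; together with the lower bound on $d(L)$ coming from Proposition \ref{behavewell} (which already bounds the part of $d(L)$ supported off $\mathfrak L$), conclude that $\prod_{p \in \mathfrak L} p$ satisfies an inequality of the form $N \ll N^{3/8 + \epsilon} \cdot C(c)$, which forces $N$, and hence every prime in $\mathfrak L$, to be bounded in terms of $c$ alone.

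The main obstacle I anticipate is step (3)–(4): making the character-value condition at the primes of $\mathfrak L$ genuinely equivalent to local representability by $K_p + \bw$. The subtlety is that a rank-one unimodular Jordan block is only part of $K_p$; one must check that the \emph{coset} structure (the shift by $\bw$) and the lower Jordan blocks do not reintroduce representability regardless of the Legendre symbol, and handle the $p$ versus $p^2$ dichotomy in \eqref{norm} uniformly. This is where Lemma \ref{watson2}'s second assertion — $\Lambda_p(L_p) = \{\bx : Q(\bx) \in p\z_p\}$ when the unimodular part is anisotropic — is essential, since ``not behaving well at $p$'' for a \emph{ternary} lattice with $p \nmid c$ forces exactly this anisotropic rank-$\leq 1$ situation, pinning the obstruction to a single quadratic character. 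A secondary bookkeeping point is ensuring the exponents work out: the $X^{19/8+\epsilon}$ loss in \eqref{sum2} is harmless because $X$ (built from $a$ and the $2c$-part) is conductor-bounded, whereas the genuinely growing modulus $\Gamma = \prod_{p \in \mathfrak L} p$ only enters with exponent $3/8 + \epsilon < 1$, which is what makes the final contradiction close.
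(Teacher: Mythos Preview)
There is a fundamental misidentification in your setup. The primes in $\mathfrak L$ are, by definition, the prime divisors of $d(L)$ that do \emph{not} divide $d(K)$. Since $p \nmid d(K)$ for $p \in \mathfrak L$, the local lattice $K_p$ is unimodular of full rank $3$, so $K + \bw$ certainly \emph{does} behave well at every prime of $\mathfrak L$. Your step~(2) therefore records a non-existent obstruction: there is no Legendre-symbol condition controlling representation by $K_p + \bw$ at these primes, and the character system you build in step~(3) has no content. Consequently steps~(4)--(5) yield nothing new: bounding the successive minima of $K$ simply reproves that $d(K)$ is bounded, which is Proposition~\ref{behavewell}, and tells you nothing about the size of the primes in $\mathfrak L$.

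The paper's proof proceeds differently, working with a \emph{partially} descended coset. Given $\ell \in \mathfrak L$ with $\ell \nmid 2c$, one applies Watson transformations (Proposition~\ref{descend}) at all bad primes \emph{other than} $\ell$, and at $\ell$ just enough times to arrange $L_\ell \cong \langle \alpha, \beta\ell^2, \gamma\ell^2\rangle$, so that the resulting (still regular, still conductor $c$) coset behaves well at every odd prime not dividing $\ell c$. Two features of this intermediate coset are used simultaneously: (i) the explicit shape of $L_\ell$ forces every rank-$2$ sublattice of $L$ to have discriminant in $\ell^2\z$, giving $\ell^2 \leq \mu_1\mu_2$; and (ii) the locally represented integers are controlled by a \emph{single} quadratic character mod $\ell$ together with congruence data modulo a $c$-bounded modulus, so Proposition~\ref{charactersum} and the counting argument via Proposition~\ref{inequality} give $\mu_2 \ll_c \ell^{1/2}$. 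Combining (i) and (ii) yields $\ell^2 \leq \mu_2^2 \ll_c \ell$, bounding $\ell$. The idea you are missing is that one must work, one prime at a time, with a coset whose discriminant is still divisible by a high power of that prime---not with the fully reduced $K + \bw$, where the link between $\ell$ and the successive minima has been severed.
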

\begin{proof}
Let $\ell$ be a prime in $\mathfrak L$ which does not divide $2c$.   Without loss of generality, we may assume that $L + \bv$ does not behave well at $\ell$ but does so at all odd primes not dividing $\ell c$.  Moreover, since successive applications of Proposition \ref{descend} at $\ell$ to $L + \bv$ results in a $\z$-coset which behaves well at $\ell$, we can further assume that $L_\ell$ is isometric to $\langle \alpha, \beta\ell^2, \gamma\ell^2\rangle$, where $\alpha, \beta, \gamma \in \z_\ell^\times$.  Let $a$ and $r$ be the positive integers as defined in \eqref{ar}, and $I$ be the product of prime divisors of $d(L)$ which do not divide $2\ell c$.  It is easy to see that $L + \bv$ represents all positive integers congruent to $r$ mod $a$ that are relatively prime to $I$.  Let $b$ be the gcd of $a$ and $r$.

By applying Proposition \ref{charactersum} to the quadratic residue character mod $\ell$, taking $\epsilon = \frac{1}{32}$ and $X$ to be the product $\frac{a}{b}I$, we see that the number of positive integers less than $m$ which are represented by $L + \bv$ is
\begin{equation}
\frac{b\phi(\ell)}{2a\ell I} m + O_c(m^{\frac{1}{2}}\ell^{\frac{7}{32}}).
\end{equation}
Therefore, there exists a positive constant $N_1$, depending only on $c$, such that the number of positive integers less than $m$ which are represented by $L + \bv$ is at least
\begin{equation} \label{atleast}
\frac{b\phi(\ell)}{2a\ell I} m -  N_1m^{\frac{1}{2}}\ell^{\frac{7}{32}}.
\end{equation}
Now, suppose that $m < \mu_2$, and let $\ba \in L$ such that $Q(\ba + \bv) \leq m$.  We write $\ba$ as $a_1\be_1 + a_2\be_2 + a_3\be_3$, where $\{\be_1, \be_2, \be_3\}$ is a Minkowski reduced basis of $L$.  Then, by Proposition \ref{inequality}, we must have
$$\vert a_3 \vert \leq 8, \quad \vert a_2 \vert \leq 21, \quad \text{and} \quad Q(\ba + \bv) \geq \mu_1(a_1^2 - 30 \vert a_1 \vert).$$
Thus, if $\vert a_1 \vert > 15 + \sqrt{225 + m}$, then $Q(\ba + \bv) > m$  which would be impossible.  Therefore, the number of positive integers less than $m$ which are represented by $L + \bv$ is at most
\begin{equation}\label{atmost}
43\cdot 17 \cdot (2(15 + \sqrt{225 + m}) + 1).
\end{equation}
Combining \eqref{atleast} and \eqref{atmost} together, we obtain the inequality
$$\frac{b\phi(\ell)}{2a\ell I} m -  N_1m^{\frac{1}{2}}\ell^{\frac{7}{32}} \leq 43\cdot 17 \cdot (2(15 + \sqrt{225 + m}) + 1).$$
Using the inequality $\frac{\ell}{\phi(\ell)} \ll \ell^{\frac{1}{32}}$, we deduce that whenever $m < \mu_2$,
$$m \ll_c \ell^{\frac{1}{2}}.$$
This implies that $\mu_2$ itself also satisfies $\mu_2 \ll_c \ell^{\frac{1}{2}}$.

Since $L_\ell \cong \langle \alpha, \beta \ell^2, \gamma \ell^2 \rangle$, we must have
$$\ell^2 \leq \mu_1\mu_2 \leq \mu_2^2 \ll_c \ell$$
and hence $\ell$ is bounded above by a constant depending only on $c$.
\hfill $\square$\end{proof}

We are ready to prove Theorem \ref{mainpolynomial}, which is now restated in the language of $\z$-cosets.

\begin{thm} \label{maincoset}
Let $c$ be a fixed positive integer.  There are only finitely many isometry classes of positive primitive ternary regular $\z$-cosets with conductor $c$.
\end{thm}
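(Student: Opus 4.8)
The plan is to combine the reduction by Watson transformations with the discriminant estimates of Sections~5 and~6, and then to bootstrap the geometric estimate of Section~5 to an arbitrary coset. First I would take a primitive regular positive ternary $\z$-coset $L + \bv$ of conductor $c$ and apply Proposition~\ref{descend} repeatedly, at each stage choosing an odd prime $p \nmid c$ at which the current coset fails to behave well. Such a step is always available when needed and strictly decreases the discriminant: primitivity forces the $p$-adic localization to be isometric to $\langle \epsilon_1, \epsilon_2 p^{a_2}, \epsilon_3 p^{a_3}\rangle$ with $\epsilon_i \in \z_p^\times$ and $0 \le a_2 \le a_3$, failing to behave well means $a_2 \ge 1$, so $p^2$ divides the discriminant and Lemma~\ref{reducedisc} applies. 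Since the discriminant is a positive integer, the process terminates, producing a primitive regular positive ternary $\z$-coset $K + \bw$ of conductor $c$ that behaves well at every prime not dividing $2c$, with $d(K)$ dividing $d(L)$. By Proposition~\ref{behavewell}, $d(K) \ll_c 1$, so every prime divisor of $d(L)$ outside the set $\mathfrak L$ is $\ll_c 1$; and by Proposition~\ref{bound2} the primes in $\mathfrak L$ are $\ll_c 1$ as well. Hence all prime divisors of $d(L)$ lie in a finite set $P(c)$ depending only on $c$.

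It remains to bound $d(L)$ itself. Every prime at which $L_p$ does not represent all of $\z_p$ divides $2c\,d(L)$, hence lies in the finite set $P(c) \cup \{p : p \mid 2c\}$. As in the proof of Proposition~\ref{behavewell}, regularity of $L + \bv$ then shows that $L + \bv$ represents every large enough positive integer that lies in a fixed union of residue classes modulo some integer $a \ll_c 1$ (coming from Lemma~\ref{localrep} at the primes dividing $2c$) and satisfies a fixed quadratic residue condition modulo each odd prime of $P(c)$ at which $L$ does not behave well --- at such a prime $L_p$ represents only the units of a single square class rather than all units. This set of represented integers has density $\gg_c 1$ and is cut out by characters of conductor $\ll_c 1$.

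Substituting this information into Proposition~\ref{inequality} and repeating the argument of Proposition~\ref{behavewell}, but appealing to Proposition~\ref{charactersum} in place of Propositions~\ref{eresult} and~\ref{kkolemma} so as to accommodate the additional quadratic characters, one locates represented integers in short intervals that are not represented by any polynomial in a fixed finite family of binary sub-polynomials of $L + \bv$. This forces the successive minima $\mu_1, \mu_2, \mu_3$ of $L$ to satisfy $\mu_i \ll_c 1$, whence $d(L) \le \mu_1\mu_2\mu_3 \ll_c 1$. Finally, Lemma~\ref{finite} shows that there are only finitely many isometry classes of integral ternary $\z$-cosets of bounded discriminant, which completes the proof.

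The step I expect to be the main obstacle is this last one: carrying out the estimate of Proposition~\ref{behavewell} for a coset that need not behave well at its (finitely many, now bounded) bad primes. The new feature is the presence of genuine quadratic residue constraints at the odd primes where $L_p$ fails to behave well, so the count of admissible integers in a short interval must be performed with those characters present; Proposition~\ref{charactersum} is tailored for exactly this, and because all moduli involved are $\ll_c 1$ the resulting error terms are negligible. One also needs that the number of bad primes --- not merely their size --- is $\ll_c 1$, but this is automatic once Proposition~\ref{bound2} has bounded their size.
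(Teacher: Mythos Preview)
Your proposal is correct and follows essentially the same route as the paper: reduce via repeated applications of Proposition~\ref{descend} to a coset behaving well away from $2c$, invoke Propositions~\ref{behavewell} and~\ref{bound2} to bound all prime divisors of $d(L)$, and then rerun the successive-minima argument of Proposition~\ref{behavewell} with Proposition~\ref{charactersum} handling the (now bounded) quadratic-character constraints, finishing with Lemma~\ref{finite}. The paper organizes the same ideas slightly differently---it records the Watson-transformation reduction in the preamble to Section~6 rather than inside the proof, and in bounding $\mu_3$ it still uses Proposition~\ref{eresult} to produce the seventeen auxiliary primes $\ell_j$ while reserving Proposition~\ref{charactersum} for the final step of locating a represented integer in the required residue class---but these are packaging differences, not substantive ones.
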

\begin{proof}
Let $L + \bv$ be a positive primitive ternary regular $\z$-coset with conductor $c$.  In what follows, when we say that a numerical quantity is bounded, it will be understood that the said quantity is bounded above by a constant depending only on $c$.  By virtue of Lemma \ref{finite} and inequality \eqref{disc}, the theorem will be proved once we show that the successive minima $\mu_1, \mu_2, \mu_3$ of $L$ are all bounded.

Let $t$ be the number of prime divisors of $d(L)$ which do not divide $2c$, and let $T$ be the product of all these primes.  Propositions \ref{behavewell} and \ref{bound2} show that $T$ and $t$ are bounded.  Let $a$ and $r$ be the integers as defined by \eqref{ar}.

As is in the proof of Proposition \ref{bound2}, it follows from Proposition \ref{charactersum} that the number of integers less than $m$ represented by $L + \bv$ is
\begin{equation} \label{main1}
2^{-t} \frac{\phi(T)}{Ta}m + O_c(m^{\frac{1}{2}}).
\end{equation}
At the same time, if $m < \mu_2$, the same number must be at most
\begin{equation}\label{main2}
43\cdot 17\cdot (2(15 + \sqrt{225 + m}) + 1).
\end{equation}
Combine \eqref{main1} and \eqref{main2} together and proceed as in the proof of Proposition \ref{bound2}, we deduce that $\mu_2$ is bounded.  This, of course, implies that $\mu_1$ is also bounded.

To bound $\mu_3$, we proceed as in the proof of Proposition \ref{behavewell} but keep in mind that $t, T, \mu_1, \mu_2, a$, and $r$ are all bounded.  Let $\{\be_1, \be_2, \be_3\}$ be a Minkowski basis of $L$.  Let $M$ be the sublattice spanned by $\be_1$ and $\be_2$.  As in the proof of Proposition \ref{behavewell}, there are 17 primes $\ell_1, \ldots, \ell_{17}$ such that $M_{\ell_j}$ is anisotropic for $1 \leq j \leq 17$.  Let $\kappa$ be the product of these 17 primes, which are bounded.  There exists a positive integer $m \leq \kappa^2$ such that for any integer $\lambda$, $a(m + \lambda\kappa^2) + r$ is not represented by any of the 17 binary quadratic polynomials $Q(y_1\be_1 + y_2\be_2 + (j-9)\be_3 + \bv)$.

Let $d$ be the gcd of $a\kappa^2$ and $am + r$, which is relatively prime to $T$.  By Proposition \ref{charactersum}, there exists a bounded positive integer $n$ such that $dn$ is represented by $L_p$ for all $p \mid T$ and $dn \equiv am + r$ mod $a\kappa^2$.  Then \eqref{sum2} guarantees that $dn$ is a bounded integer represented by $L + \bv$, and is of the form $a(m + \lambda\kappa^2) + r$ for some integer $\lambda$.  It follows from Proposition \ref{inequality}(a) that $\mu_3 \leq \frac{2}{3}dn$, which means that $\mu_3$ is bounded.
\hfill $\square$\end{proof}

\section{Polygonal Forms}

In this section, we will explain how either Theorem \ref{mainpolynomial} or Theorem \ref{maincoset} implies the finiteness result of regular ternary triangular forms proved in \cite{co2}.  But instead of focusing on triangular forms, we will broaden our discussion to include a wider class of quadratic polynomials.

Let $m \geq 3$ be a positive integer.  The set of (generalized) $m$-gonal numbers is the set of integers represented by the integral quadratic polynomial
$$\frac{(m-2)x^2 - (m - 4)x}{2}.$$
For examples, when $m = 4$ these numbers are precisely the squares of integers, and the case $m = 3$ gives us the triangular numbers.   Given any positive integers $a_1, \ldots, a_n$, we call the polynomial
\begin{equation}\label{mgonal}
\Delta_m(a_1, \ldots, a_n): = \sum_{i=1}^n a_i \left(\frac{(m-2)x_i^2 - (m - 4)x_i}{2} \right)
\end{equation}
an $m$-gonal form (in $n$ variables).  When $m = 3$, this is exactly a triangular form.  After completing the squares, \eqref{mgonal} becomes
$$\Delta_m(a_1, \ldots, a_n) = \sum_{i= 1}^n a_i \frac{m-2}{2} \left (x_i - \frac{m-4}{2(m-2)}\right)^2 - \frac{(m-4)^2}{8(m-2)}\sum_{i=1}^n a_i.$$
It follows that the conductor of $\Delta_m(a_1, \ldots, a_n)$ is
$$\begin{cases}
2(m-2) & \mbox{ if $m$ is odd},\\
m-2 & \mbox{ if $\ord_2(m) = 1$},\\
\frac{m-2}{2} & \mbox{ if $\ord_2(m) > 1$}.
\end{cases}$$
Therefore, for a fixed $m$, the conductor of $\Delta_m(a_1, \ldots, a_n)$ is the same for any choices of $a_1, \ldots, a_n$.

Let $L$ be the $\z$-lattice which is isometric to $\langle 4(m-2)^2a_1, \ldots, 4(m-2)^2a_n\rangle$ with respect to some basis $\{\be_1, \ldots, \be_n\}$, and $\bv$ be the vector $-\frac{m-4}{2(m-2)}(\be_1 + \cdots + \be_n)$.  Then an integer $k$ is represented by $\Delta_m(a_1, \ldots, a_n)$ if and only if $L + \bv$ represents $8(m-2)k + (m-4)^2(a_1 + \cdots + a_n)$.  In particular, $\Delta_m(a_1, \ldots, a_n)$ is regular if and only if $L + \bv$ is regular.  Equivalent $m$-gonal forms will lead to isometric $\z$-cosets under this correspondence.  However, we have the following simple lemma about equivalent $m$-gonal forms.

\begin{lem}\label{simple}
If $\Delta_m$ and $\Delta_m'$ are equivalent $m$-gonal forms, then up to a permutation of the variables $\Delta_m = \Delta_m'$.
\end{lem}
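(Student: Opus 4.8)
The plan is to show that the equivalence of two $m$-gonal forms, which is an equality of polynomials $\Delta_m'(\bx) = \Delta_m(\bx T + \bu)$ for some $T \in \mathrm{GL}_n(\z)$ and $\bu \in \z^n$, forces $T$ to be a signed permutation matrix and then to rule out the sign changes. First I would pass to the associated quadratic forms: after completing the square, $\Delta_m(a_1,\dots,a_n)$ and $\Delta_m'(a_1',\dots,a_n')$ have quadratic parts $\tfrac{m-2}{2}\langle a_1,\dots,a_n\rangle$ and $\tfrac{m-2}{2}\langle a_1',\dots,a_n'\rangle$ respectively. An equivalence of the polynomials restricts to an isometry of these diagonal quadratic forms via the matrix $T$, so $T^{t}\langle a_1',\dots,a_n'\rangle T = \langle a_1,\dots,a_n\rangle$ (after cancelling $\tfrac{m-2}{2}$). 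Since the $a_i$ and $a_i'$ are positive integers, the diagonal lattices $\langle a_1,\dots,a_n\rangle$ and $\langle a_1',\dots,a_n'\rangle$ are positive definite and diagonal; I would invoke the standard fact that an isometry between two diagonal positive definite $\z$-lattices with orthogonal basis vectors that are pairwise non-proportional (or more carefully, an argument about which basis vectors of one lattice can map into the other) must permute the basis vectors up to sign. Concretely, the images $T\be_i$ are vectors of norm $a_i$ in $\langle a_1',\dots,a_n'\rangle$, and writing $T\be_i = \sum_j c_{ij}\be_j'$ one gets $a_i = \sum_j c_{ij}^2 a_j'$; combining this over all $i$ with $\det T = \pm 1$ and the orthogonality relations forces each row of $T$ to have a single nonzero entry equal to $\pm 1$. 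Hence $T$ is a signed permutation matrix, which already gives $\{a_1,\dots,a_n\} = \{a_1',\dots,a_n'\}$ as multisets, i.e. up to a permutation the quadratic parts agree.

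It remains to eliminate the sign ambiguities so that the full polynomials — not just their quadratic parts — coincide after a permutation. After relabelling the variables by the permutation underlying $T$, we may assume $T = \mathrm{diag}(\varepsilon_1,\dots,\varepsilon_n)$ with $\varepsilon_i \in \{\pm1\}$ and $a_i = a_i'$ for all $i$. The identity $\Delta_m'(\bx) = \Delta_m(\bx T + \bu)$ becomes, coordinatewise, an equality of the one-variable polynomials
$$a_i\,\frac{(m-2)x_i^2-(m-4)x_i}{2} = a_i\,\frac{(m-2)(\varepsilon_i x_i + u_i)^2 - (m-4)(\varepsilon_i x_i + u_i)}{2}$$
for each $i$ (the cross terms vanish because both forms are diagonal). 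Dividing by $a_i$ and comparing coefficients of $x_i^2$, $x_i$, and the constant term, the $x_i^2$ coefficients automatically match, the linear coefficient gives $-(m-4) = \varepsilon_i\big(2(m-2)u_i - (m-4)\big)$, and the constant term gives $0 = (m-2)u_i^2 - (m-4)u_i$. From the constant-term equation $u_i\big((m-2)u_i - (m-4)\big) = 0$; since $0 \le m-4 < m-2$ and $u_i \in \z$, the only integer solution is $u_i = 0$. Plugging $u_i = 0$ into the linear-coefficient equation gives $-(m-4) = -\varepsilon_i(m-4)$, so $\varepsilon_i = 1$ whenever $m \ne 4$. For the exceptional case $m = 4$ (where $\Delta_4$ is a sum of squares $\sum a_i x_i^2$), the sign $\varepsilon_i$ is genuinely free, but $x_i \mapsto -x_i$ fixes $a_i x_i^2$, so the polynomial is unchanged regardless; thus in every case $\Delta_m = \Delta_m'$ after the permutation.

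The only real subtlety — and the step I would write out most carefully — is the first one: deducing that an isometry of two positive definite diagonal $\z$-lattices is a signed permutation. This is not quite true for arbitrary diagonal lattices (for instance $\langle1,1\rangle$ has extra isometries), but it holds in the sense needed here because we only need that the \emph{multisets} $\{a_1,\dots,a_n\}$ and $\{a_1',\dots,a_n'\}$ agree and that \emph{some} isometry realizing the equivalence is a signed permutation — or, more robustly, one can bypass lattices entirely and argue directly with the polynomial identity $\Delta_m'(\bx) = \Delta_m(\bx T + \bu)$ by extracting, for each $i$, the minimal positive value and its structure. I expect the cleanest writeup to observe that for a diagonal positive $m$-gonal form the set of values taken on the standard basis vectors, together with the "at most one variable active" structure, is an equivalence invariant, forcing the permutation; then the sign and shift analysis above finishes the proof. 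This last bookkeeping is routine once the structural claim is in hand.
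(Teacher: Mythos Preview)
Your argument reaches the right conclusion, but it takes a substantially longer route than the paper's and contains a small slip. The paper's proof is essentially two lines: equivalent polynomials have equivalent quadratic parts, and equivalent positive definite forms share the same successive minima; since the successive minima of a diagonal form $\langle a_1,\dots,a_n\rangle$ are exactly its sorted diagonal entries, the multisets $\{a_i\}$ and $\{a_i'\}$ coincide. The decisive observation you miss is that an $m$-gonal form $\Delta_m(a_1,\dots,a_n)=\sum_i a_i P_m(x_i)$ is \emph{completely determined} by the tuple $(a_1,\dots,a_n)$ --- the linear part is $-\tfrac{m-4}{2}\sum a_i x_i$, fixed once the $a_i$ are --- so the moment the multisets agree, $\Delta_m=\Delta_m'$ after a permutation, and the proof is over. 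Your entire second paragraph is therefore unnecessary: you already have $\{a_i\}=\{a_i'\}$ at the end of the first paragraph.

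Two remarks on what you did write. First, your worry that an isometry between diagonal positive definite $\z$-lattices need not be a signed permutation is misplaced: it always is (the isometries of $\langle 1,1\rangle$ over $\z$ are precisely the eight signed permutations; you may be thinking of the real orthogonal group). So your first-paragraph reasoning is valid, just heavier than the successive-minima invariant the paper uses. Second, the superfluous second paragraph has a real gap at $m=3$: your inequality ``$0\le m-4$'' fails there, and the constant-term equation $u_i\big((m-2)u_i-(m-4)\big)=0$ admits the additional integer solution $u_i=-1$ (paired with $\varepsilon_i=-1$), reflecting the symmetry $P_3(x)=P_3(-x-1)$ of the triangular numbers. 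This extra case is easily patched --- the substitution still fixes $\Delta_3$ --- but, as noted, none of this analysis is needed once you observe that the coefficients alone determine the $m$-gonal form.
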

\begin{proof}
Suppose that $\Delta_m$ and $\Delta_m'$ are equivalent $m$-gonal forms in $n$ variables.  Let $Q$ and $Q'$ be the quadratic part of $\Delta_m$ and $\Delta_m'$ respectively.   Then $Q$ and $Q'$ are equivalent quadratic forms, and hence $Q$ and $Q'$ have the same successive minima.  However, both $Q$ and $Q'$ are diagonal quadratic forms, and a diagonal quadratic form is determined, up to a permutation of the variables, by its successive minima.  Therefore, after a suitable permutation of the variables, we may assume that $Q = Q'$.  Since an $m$-gonal form is completely determined by the coefficients of its quadratic part, we may conclude that $\Delta_m$ and $\Delta_m'$ must be equal.
\hfill $\square$\end{proof}

As a corollary of Lemma \ref{simple} and Theorem \ref{maincoset}, we obtain the following finiteness result for regular ternary $m$-gonal forms which includes the case of triangular forms as a special case.

\begin{cor}
For every $m \geq 3$, there are only finitely many primitive regular ternary $m$-gonal forms.
\end{cor}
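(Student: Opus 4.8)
The plan is to combine Lemma \ref{simple} with Theorem \ref{maincoset}, exploiting the explicit correspondence between $m$-gonal forms and $\z$-cosets described in the paragraph preceding Lemma \ref{simple}. First I would fix $m \geq 3$ and observe that, as computed above, the conductor $c$ of $\Delta_m(a_1, \ldots, a_n)$ depends only on $m$ and not on the choice of $a_1, \ldots, a_n$. Thus every primitive regular ternary $m$-gonal form $\Delta_m(a_1, a_2, a_3)$ gives rise, via $L + \bv$ with $L \cong \langle 4(m-2)^2 a_1, 4(m-2)^2 a_2, 4(m-2)^2 a_3 \rangle$ and $\bv = -\frac{m-4}{2(m-2)}(\be_1 + \be_2 + \be_3)$, to a positive ternary $\z$-coset of this fixed conductor $c$.

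The two points that need checking are primitivity and regularity of the associated $\z$-coset. Regularity transfers directly: an integer $k$ is represented by $\Delta_m(a_1, a_2, a_3)$ if and only if $L + \bv$ represents $8(m-2)k + (m-4)^2(a_1 + a_2 + a_3)$, and since the same linear substitution relates local representability on both sides, $\Delta_m(a_1, a_2, a_3)$ is regular precisely when $L + \bv$ is regular. For primitivity, I would note that if the ideal $\norm(L + \bv)$ generated by $Q(L + \bv)$ were contained in a proper ideal $d\z$ with $d > 1$, then every value of $\Delta_m(a_1, a_2, a_3)$ would lie in a fixed arithmetic progression forced by this divisibility; since $\Delta_m(a_1, a_2, a_3)$ is assumed primitive (i.e.\ $\gcd(a_1, a_2, a_3) = 1$), one checks that the coset still generates all of $\z$ after accounting for the fixed shift and scaling — more precisely, primitivity of the $m$-gonal form together with the fact that the conductor $c$ controls the denominators means the resulting coset is primitive. (If a subtlety arises here, one can instead pass from $L + \bv$ to its primitive rescaling without changing the conductor or regularity, which only enlarges the finite family.)

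With these observations, Theorem \ref{maincoset} applies: there are only finitely many isometry classes of positive primitive ternary regular $\z$-cosets with conductor $c$. Hence the $\z$-cosets arising from primitive regular ternary $m$-gonal forms fall into finitely many isometry classes. The final step is to pass back from isometry classes of cosets to the forms themselves. This is where Lemma \ref{simple} enters: although a priori several $m$-gonal forms could map to isometric cosets, equivalent $m$-gonal forms coincide up to permutation of variables, and the quadratic part $\langle 4(m-2)^2 a_1, 4(m-2)^2 a_2, 4(m-2)^2 a_3\rangle$ of the coset determines the unordered triple $\{a_1, a_2, a_3\}$. So each isometry class of cosets accounts for at most a bounded number of $m$-gonal forms, and therefore only finitely many primitive regular ternary $m$-gonal forms exist.

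The main obstacle I anticipate is the bookkeeping around primitivity of the $\z$-coset: one must verify carefully that primitivity of the $m$-gonal form (coprimality of the coefficients $a_i$) really does translate into $\norm(L + \bv) = \z$, given that the correspondence involves both a dilation by $8(m-2)$ and a shift by $(m-4)^2(a_1 + a_2 + a_3)$. Everything else — the conductor computation, the transfer of regularity, and the final descent from cosets to forms via Lemma \ref{simple} — is essentially formal once the correspondence is set up, so this is really a short corollary of Theorem \ref{maincoset}.
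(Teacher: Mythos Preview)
Your approach is correct and matches the paper's intended argument: the paper presents this result with no proof at all beyond the sentence ``As a corollary of Lemma~\ref{simple} and Theorem~\ref{maincoset},'' so you are filling in exactly the details the authors had in mind---fix $m$, use the correspondence to pass to $\z$-cosets of a fixed conductor, invoke Theorem~\ref{maincoset}, and descend via Lemma~\ref{simple}. Your identification of the primitivity bookkeeping as the only nontrivial point is accurate, and your fallback of rescaling to a primitive coset (noting that the scaling factor divides $8(m-2)$ and hence is bounded in terms of $m$, so fibers remain finite) is a clean way to handle it.
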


\section*{Acknowledgements}

The authors would like to thank the referee for his/her insightful comments and suggestions.

\section*{References}
\bibliographystyle{elsart-num-sort}
\bibliography{Chan_Ricci_Bib.bib}
\end{document}